\newcommand{\Z}{\mathbb{Z}}
\newcommand{\R}{\mathbb{R}}
\newcommand{\C}{\mathbb{C}}
\newcommand{\F}{\mathbb{F}}
\renewcommand{\emptyset}{\varnothing}
\newcommand{\Star}{$^*$-}
\DeclareMathOperator{\Supp}{Supp}
\DeclareMathOperator{\degree}{deg}
\DeclareMathOperator{\Char}{char}
\theoremstyle{plain}
\newtheorem{theorem}{Theorem}[section]
\newtheorem{lemma}[theorem]{Lemma}
\newtheorem{proposition}[theorem]{Proposition}
\newtheorem{corollary}[theorem]{Corollary}
\theoremstyle{definition}
\newtheorem{definition}[theorem]{Definition}
\newtheorem{example}[theorem]{Example}
\newtheorem{conjecture}[theorem]{Conjecture}
\newtheorem{problem}{Problem}
\newtheorem{remark}[theorem]{Remark}
\title[Rings graded by torsion-free groups]{Units, zero-divisors and idempotents in rings graded by torsion-free groups}
\begin{document}

\author{Johan \"{O}inert}
\address{Department of Mathematics and Natural Sciences,\\
Blekinge Institute of Technology,
SE-37179 Karlskrona, Sweden}
\email{johan.oinert@bth.se}

\date{\today}

\subjclass[2020]{16W50, 16S34, 16S35, 20F99, 16U40, 16U60, 16U70, 16U99}
\keywords{group graded ring, torsion-free group, unique product group,
unit conjecture, zero-divisor conjecture, idempotent conjecture}

\begin{abstract}
The three famous problems concerning units, zero-divisors and idempotents in group rings of torsion-free groups, commonly attributed to I. Kaplansky, have been around for more than 60 years
and still remain open in characteristic zero.
In this article, we introduce the corresponding problems in the considerably more general context of arbitrary rings graded by torsion-free groups.
For natural reasons, we will restrict our attention to 
rings without non-trivial homogeneous zero-divisors with respect to the given grading.
We provide a partial solution to the extended problems by solving them for rings graded by unique product groups.
We also show that the extended problems exhibit the same (potential) hierarchy as the classical problems for group rings.
Furthermore, a ring which is graded by an arbitrary torsion-free group is shown to be indecomposable, and to have
no non-trivial central zero-divisor
and no non-homogeneous central unit.
We also present generalizations of the classical group ring conjectures.
\end{abstract}

\maketitle

\section{Introduction}\label{Sec:Intro}

%
With a few exceptions, notably \cite{H1940}, the first articles on group rings of infinite groups appeared in the early 1950s. A key person 
in that line of research was I.~Kaplansky, known for his many deep contributions to ring theory and operator algebra.
In his famous talk, given at a conference that was held on June 6-8, 1956 at Shelter Island, Rhode Island, New York, he proposed twelve problems in the theory of rings \cite{Kap56,Kap70},
one of which has become known as \emph{the zero-divisor problem} (for group rings).

Although popularized by Kaplansky, the zero-divisor problem and its corresponding conjecture
had in fact already been introduced by G. Higman in his 1940 thesis \cite[p.~77]{HigmanThesis1940} (see
also \cite[p.~112]{Sandling1981}). In \cite{HigmanThesis1940}, Higman also introduced the so-called \emph{unit problem} and the corresponding \emph{unit conjecture}.
A third problem which is closely related to the 
previous two, is \emph{the idempotent problem}.
For clarity, we now recall the exact formulation of the three problems.

\begin{problem}[Higman/Kaplansky]\label{prob:KapGR}
Let $K$ be a field, let $G$ be a torsion-free group
and denote by $K[G]$ the corresponding group ring.
\begin{enumerate}[{\rm (a)}]
	\item
	Is every unit in $K[G]$ necessarily trivial, i.e. a scalar multiple of a group element?
	\item
	Is $K[G]$ necessarily a domain?
	\item
	Is every idempotent in $K[G]$ necessarily trivial, i.e. either $0$ or $1$?
\end{enumerate}
\end{problem}

Many of the problems in Kaplansky's original list \cite{Kap56} have been solved.
The zero-divisor problem and the unit problem have been solved, in the affirmative, for several important classes of groups (see e.g. \cite{Bro1976,Cli1980,Far1976,For1973,KLM1988,Li77}). 
Significant progress has been made on the idempotent problem using algebraic as well as analytical methods (see e.g. \cite{Bur1970,For1973b} and \cite{HK2001,Mar1986,MY2002,Pus2002}).
For a thorough account of the development on the above problems during the 1970s, we refer the reader to D. S. Passman's extensive monograph \cite{Pas1977Book}.
In recent years computational approaches have been proposed as means
of attacking the zero-divisor problem (see \cite{DHJ2015,Sch2013}).
In 2021, there was a major breakthrough when G. Gardam \cite{Gardam2021} presented a counterexample to the unit conjecture in characteristic 2.
Soon thereafter, A. G. Murray \cite{Murray2021} presented counterexamples in arbitrary prime characteristic. 
Nevertheless, for a general group $G$ all three problems remain open for group rings of characteristic zero.

We should point out that the problems exhibit a (potential) hierarchy. Indeed,
an affirmative answer to the unit problem, for a fixed group $G$, implies an affirmative answer to the zero-divisor problem, which in turn implies an affirmative answer to the idempotent problem for the same group $G$ (see \cite[Rem.~1.1]{Val2002}).

In the last two decades the idempotent problem has regained interest, mainly due to its
connection with the Baum-Connes conjecture in operator algebra (see e.g \cite{Val2002}) via the so-called Kadison-Kaplansky conjecture for reduced group C\Star algebras.
The idempotent problem is also connected to the Farrell-Jones conjecture (see \cite{BLR2008}).
Moreover, W. L\"{u}ck \cite{L2002} has shown that if $G$ is a torsion-free group and $K$ is a subfield of $\C$ which
satisfies the Atiyah conjecture \cite[Conj.~10.3]{L2002} with coefficients in $K$, then
the zero-divisor problem has an affirmative answer for $K[G]$.
Altogether, this shows that Problem~\ref{prob:KapGR}, in particular in characteristic zero, remains highly relevant
to modern mathematics.\\

In this article we will consider
Problem~\ref{prob:KapGR} from a more general point of view, namely that of group graded rings.
Let $G$ be a group with identity element $e$.
Recall that a ring $R$ is said to be \emph{$G$-graded} (or \emph{graded by $G$})
if there is a collection $\{R_g\}_{g\in G}$ of additive subgroups of $R$ such that $R = \oplus_{g\in G} R_g$ and $R_g R_h \subseteq R_{gh}$, for all $g,h\in G$.
Furthermore, a $G$-graded ring $R$ 
is 
called
\emph{strongly $G$-graded} (or \emph{strongly graded by $G$}) if
$R_g R_h = R_{gh}$, for all $g,h\in G$.

Notice that the group ring $K[G]$ may be equipped with a canonical strong $G$-grading by putting
$R:=K[G]$ with $R_g := Kg$, for $g\in G$.
With this in mind, it is natural to ask whether it would make sense to extend 
Problem~\ref{prob:KapGR} to the more general context of strongly group graded rings.
It turns out that it does. In fact, we propose the following even more general set of problems
which will be the main focus of this article.

\begin{problem}\label{prob:KapSTR}
Let $G$ be a torsion-free group and let $R$ be a unital $G$-graded ring equipped with a
non-degenerate (see Definition~\ref{def:gradings})
$G$-grading such that $R_e$ is a domain.
\begin{enumerate}[{\rm (a)}]
	\item 
	Under the assumption that $\Char(R_e)=0$, is every unit in $R$ necessarily homogeneous w.r.t. the given $G$-grading?
	\item
	Is $R$ necessarily a domain?
	\item
	Is every idempotent in $R$ necessarily trivial?
\end{enumerate}
\end{problem}

This article is organized as follows.

In Section~\ref{Sec:Prel},
we record the most important notation and preliminaries concerning group graded rings
that we will need in the sequel.
In particular, 
we show that the assumptions on the grading in Problem~\ref{prob:KapSTR}
make our rings especially well-behaved 
(see Proposition~\ref{prop:componentregular} and Proposition~\ref{prop:supportsubgroup}).
In Section~\ref{Sec:UPgroups},
using a result of A. Strojnowski,
we solve Problem~\ref{prob:KapSTR}
for unique product groups (see Theorem~\ref{thm:UPGmain}).
In particular, this solves Problem~\ref{prob:KapSTR} 
in the cases where $G$ is abelian or $R$ is commutative (see Example~\ref{ex:up-groups} and Corollary~\ref{Cor:Rcomm}).
In Section~\ref{Sec:Primeness},
we show that if $G$ is an arbitrary torsion-free group
and $R$ is a $G$-graded ring 
satisfying the assumptions in Problem~\ref{prob:KapSTR},
then $R$ is a prime ring (see Theorem~\ref{prop:primeness}).
In Section~\ref{Sec:Hierarchy},
we employ the primeness result from Section~\ref{Sec:Primeness}
to show that the unit, zero-divisor and idempotent problems for group graded rings
exhibit the same (potential) hierarchy as the classical problems for group rings (see Theorem~\ref{thm:hierarchy}).
In Section~\ref{Sec:Central},
we show that for an arbitrary torsion-free group $G$ and a $G$-graded ring $R$ with a non-degenerate grading such that $R_e$ is a domain (of arbitrary characteristic),
there is no non-homogeneous central unit, no non-trivial central zero-divisor and no non-trivial central idempotent (see Theorem~\ref{thm:central}).
In Section~\ref{Sec:Quotient},
we obtain several useful results concerning gradings by quotient groups
(see e.g. Proposition~\ref{prop:quotientGN}
and Proposition~\ref{prop:GcrossedHomUnits})
which are used to solve Problem~\ref{prob:KapSTR}
for $G$-crossed products when $G$ belongs to a special class of solvable groups (see Theorem~\ref{thm:SpecialSolvable}).
In Section~\ref{Sec:Conjecture}, we formulate
a conjecture (see Conjecture~\ref{conj:GradedRingsConjecture}) which generalizes
the classical unit, zero-divisor and idempotent conjectures
for group rings.

\section{Preliminaries on group graded rings}\label{Sec:Prel}

Throughout this section, let $G$ be a multiplicatively written group with identity element $e$, and let $R$ be a (not necessarily unital) $G$-graded ring.

Consider an arbitrary element $r\in R$.
Notice that $r\in R$ has a unique decomposition of the form $r= \sum_{g\in G} r_g$, where $r_g\in R_g$ is zero for all but finitely many $g\in G$.
The \emph{support of $r$} is defined as the finite set
$\Supp(r) := \{g\in G \mid r_g\neq 0\}$.
If $r\in R_g$, for some $g\in G$, and $r$ is nonzero, then $r$ is said to be \emph{homogeneous of degree $g$} and we write $\degree(r)=g$.
Note that $R_e$ is a subring of $R$. If $R$ is unital, then $1_R \in R_e$ (see e.g. \cite[Prop.~1.1.1]{NVO2004}).

We shall now highlight two types of $G$-gradings which play central roles in this article.

\begin{definition}\label{def:gradings}
\begin{enumerate}[{\rm (a)}]

	\item
	$R$ is said to have a \emph{non-degenerate} $G$-grading (cf. \cite{CR1983,OL2012}) if, for each $g \in G$ and each nonzero $r_g \in R_g$, we have $r_g R_{g^{-1}} \neq \{0\}$ and $R_{g^{-1}} r_g \neq \{0\}$.
	
	\item
	$R$ is said to have a \emph{fully component regular} $G$-grading, if
	$r_g s_h \neq 0$ for any $g,h\in G$, $r_g\in R_g \setminus \{0\}$ and $s_h \in R_h \setminus \{0\}$.
	
\end{enumerate}
\end{definition}

\begin{remark}\label{rem:StrGradComReg}
(a)
Every strong $G$-grading on a  
unital 
ring $R$ is non-degenerate. Indeed, take $g\in G$ and $r_g \in R_g$,
and suppose that $r_g R_{g^{-1}} = \{0\}$.
Then $r_g = r_g 1_R \in r_g R_e = r_g R_{g^{-1}} R_g = \{0\}$.
Thus, $r_g = 0$.
Similarly, $R_{g^{-1}} r_g  = \{0\}$ implies $r_g = 0$.

(b) 
The term \emph{fully component regular} has been chosen to capture
the essence of those gradings. 
There is no immediate connection to the \emph{component regular} gradings considered by Passman in e.g. \cite[p.~16]{Pas1989Book},
which are in fact special types of non-degenerate gradings.
\end{remark}

A grading may be both non-degenerate and fully component regular, but, as the following example shows, the two notions are quite independent.

\begin{example}\label{exmp:differenceNonDegFullyCompReg}
In the following two examples, the grading group is $G:=(\Z,+)$.

(a) Consider the polynomial ring $R:=\R[t]$ in one indeterminate.
We may define a $\Z$-grading on $R$ by putting
$R_n := \R t^n$ for $n\geq 0$, and $R_n := \{0\}$ for $n<0$.
Clearly, this grading is fully component regular,
but it is not non-degenerate.

(b) Consider the ring of $2 \times 2$-matrices with real entries, $R:=M_2(\R)$.
We may define a $\Z$-grading on $R$ by putting
\begin{displaymath}
	R_0 := \left(\begin{array}{cc}
	\R & 0 \\
	0 & \R \\
\end{array}\right)
\quad\quad
R_1 := \left(\begin{array}{cc}
	0 & \R \\
	0 & 0 \\
\end{array}\right)
\quad\quad
R_{-1} := \left(\begin{array}{cc}
	0 & 0 \\
	\R & 0 \\
\end{array}\right)
\quad
\text{ and }
\quad
R_n := \left(\begin{array}{cc}
	0 & 0\\
	0 & 0 \\
\end{array}\right)
\end{displaymath}
 whenever $|n|>1$.
This grading is neither fully component regular,
nor strong, but one easily sees that it is non-degenerate.
\end{example}

Given a subgroup $H \subseteq G$ we may define the subset
$R_H := \oplus_{h\in H} R_h$ of $R$.
Notice that $R_H$ is an $H$-graded subring of $R$.
If $R$ is unital, then $R_H$ is also unital
with $1_R = 1_{R_H} \in R_e$.
The corresponding \emph{projection map} from $R$ to $R_H$ is defined by
\begin{displaymath}
	\pi_H : R \to R_H, \quad \sum_{g\in G} r_g \mapsto \sum_{h\in H} r_h
\end{displaymath}
and it is clearly additive. In fact, it is an $R_H$-bimodule homomorphism.

\begin{lemma}\label{lem:RHbimodulemap}
Let $H$ be a subgroup of $G$.
If $a\in R$ and $b\in R_H$, then we have
$\pi_H(ab)=\pi_H(a)b$ and $\pi_H(ba)=b \pi_H(a)$.
\end{lemma}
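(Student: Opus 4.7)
The plan is to prove the identity by reducing to the homogeneous case through additivity, and then verifying the key membership condition using the fact that $H$ is a subgroup.

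Write $a=\sum_{g\in G} a_g$ with $a_g\in R_g$, and $b=\sum_{h\in H} b_h$ with $b_h\in R_h$, each sum having finite support. Since $\pi_H$ is additive and multiplication in $R$ is bilinear, it suffices to check, for each $g\in G$ and $h\in H$, that $\pi_H(a_g b_h) = \pi_H(a_g)\,b_h$ and $\pi_H(b_h a_g) = b_h\,\pi_H(a_g)$. The product $a_g b_h$ lies in $R_{gh}$ by the grading axiom, so it is a single homogeneous component; thus $\pi_H(a_g b_h)$ equals $a_g b_h$ if $gh\in H$ and is $0$ otherwise. Since $h\in H$ and $H$ is a subgroup, we have $gh\in H$ if and only if $g\in H$, which is precisely the condition for $\pi_H(a_g)=a_g$ (versus $\pi_H(a_g)=0$). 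In either case the two sides agree, so $\pi_H(a_g b_h)=\pi_H(a_g)\,b_h$. The second identity $\pi_H(b_h a_g)=b_h\,\pi_H(a_g)$ follows by the same argument applied on the other side, using that $hg\in H \iff g\in H$ when $h\in H$.

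Summing over the (finitely many) nonzero components of $a$ and $b$ and invoking additivity of $\pi_H$ yields the two claimed identities. There is no serious obstacle here; the only point that requires care is the observation that membership $gh\in H$ depends only on the coset of $g$ modulo $H$, which is exactly why passing through $\pi_H$ commutes with left or right multiplication by homogeneous elements of $R_H$. This simultaneously shows that $\pi_H$ is an $R_H$-bimodule homomorphism, as was already stated in the text preceding the lemma.
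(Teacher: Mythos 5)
Your proof is correct and rests on the same key observation as the paper's, namely that for $h\in H$ one has $gh\in H$ if and only if $g\in H$; the only cosmetic difference is that you reduce to individual homogeneous components, whereas the paper splits $a$ into the two pieces $\pi_H(a)$ and $a-\pi_H(a)$ and argues via supports. Both arguments are essentially the same.
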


\begin{proof}
Take $a\in R$ and $b\in R_H$.
Put $a':=a-\pi_H(a)$.
Clearly, $a=a'+\pi_H(a)$ and $\Supp(a') \subseteq G \setminus H$.
If $g\in G \setminus H$ and $h\in H$, then $gh \notin H$.
Thus, $\Supp(a'b) \subseteq G \setminus H$.
Hence, $\pi_H(ab)=\pi_H((a'+\pi_H(a))b)=
\pi_H(a'b)+\pi_H(\pi_H(a)b)
=0+\pi_H(a)b=\pi_H(a)b$.
Analogously, one may show that $\pi_H(ba) = b\pi_H(a)$.
\end{proof}

\begin{lemma}\label{lem:zdunitsRHR}
Suppose that the $G$-grading on $R$ is non-degenerate.
Let $H$ be a subgroup of $G$
and let $r \in R_H$. The following assertions hold:
\begin{enumerate}[{\rm (i)}]
		\item
		$r$ is a left (right) zero-divisor in $R_H$
	if and only if $r$ is a left (right) zero-divisor in $R$;

	\item
	$r$ is left (right) invertible in $R_H$ if and only if
	$r$ is left (right) invertible in $R$.
\end{enumerate}
\end{lemma}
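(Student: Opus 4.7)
The plan is to prove the non-trivial ``if'' directions in (i) and (ii); the reverse implications are immediate since $R_H$ is a unital subring of $R$.

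For part (ii), I would apply the projection $\pi_H$ directly. Suppose $s \in R$ satisfies $sr = 1$. Since $r \in R_H$ and $1 \in R_e \subseteq R_H$, Lemma~\ref{lem:RHbimodulemap} gives $\pi_H(s) \cdot r = \pi_H(sr) = \pi_H(1) = 1$, so $\pi_H(s) \in R_H$ is a left inverse of $r$ in $R_H$. The right-invertible case is symmetric via $\pi_H(rs) = r \cdot \pi_H(s)$. Note that this part uses only the $R_H$-bimodule property of $\pi_H$ and not non-degeneracy.

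For part (i), non-degeneracy enters. Assume $ra = 0$ for some nonzero $a \in R$, and choose a transversal $\{g_i\}$ for the right cosets in $H \backslash G$. Decompose $a = \sum_i a^{(i)}$ with $a^{(i)} \in R_{Hg_i} := \bigoplus_{h \in H} R_{hg_i}$. Since $r \in R_H$, one has $r \cdot R_{Hg_i} \subseteq R_{Hg_i}$, and the subspaces $R_{Hg_i}$ are in direct sum, so $ra = 0$ forces $ra^{(i)} = 0$ for every $i$. Fix an index $i_0$ with $a^{(i_0)} \neq 0$ and a homogeneous component $a_{h_0 g_{i_0}} \neq 0$ of $a^{(i_0)}$. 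By non-degeneracy there exists $c \in R_{g_{i_0}^{-1} h_0^{-1}}$ with $a_{h_0 g_{i_0}} c \neq 0$. Setting $b := a^{(i_0)} c$, the component of $b$ of degree $h h_0^{-1}$ equals $a_{h g_{i_0}} c$; since distinct $h \in H$ produce distinct degrees, $b$ lies in $R_H$, and its degree-$e$ component is $a_{h_0 g_{i_0}} c \neq 0$, so $b \neq 0$. Finally $rb = (r a^{(i_0)}) c = 0$, showing that $r$ is a left zero-divisor in $R_H$. The right zero-divisor case is symmetric, using left cosets and the other half of non-degeneracy ($R_{g^{-1}} r_g \neq 0$).

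The principal obstacle is the final step of (i): manufacturing a \emph{nonzero} element of $R_H$ out of a witness supported on a single right coset $Hg_{i_0}$. Non-degeneracy is exactly the tool needed to right-multiply by a judiciously chosen homogeneous element that translates the support back into $H$ without cancelling the designated nonzero component; the direct-sum decomposition ensures no cross-cancellation between the translated pieces.
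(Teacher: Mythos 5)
Your proposal is correct, and part (ii) coincides with the paper's argument: apply $\pi_H$ to $sr=1_R$ and invoke the $R_H$-bimodule property of $\pi_H$ (Lemma~\ref{lem:RHbimodulemap}); as you note, non-degeneracy plays no role there. For part (i) you arrive at the same conclusion by a somewhat different mechanism. The paper also works with $\pi_H$: from $rs=0$ it gets $0=\pi_H(rs)=r\,\pi_H(s)$, and if $\Supp(s)\cap H=\emptyset$ it first right-multiplies $s$ by a homogeneous $x_{g^{-1}}$ chosen via non-degeneracy so that $e\in\Supp(sx_{g^{-1}})$, which makes the projection of the new witness nonzero. You instead decompose the witness over the right cosets $Hg_i$, observe that $r\in R_H$ preserves each summand $R_{Hg_i}$ so that the annihilation passes to a single coset-component, and then translate that component into $R_H$ by one homogeneous right multiplication. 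The two proofs use the identical non-degeneracy step (producing $c$ with $a_{h_0g_{i_0}}c\neq 0$); the difference is only in how one certifies that the resulting element of $R_H$ is nonzero. The paper's version is shorter because $\pi_H$ absorbs the bookkeeping, while your coset decomposition makes the absence of cross-cancellation fully explicit and dispenses with $\pi_H$ in part (i) altogether. Your handling of the right-handed cases, using left cosets and the other half of the non-degeneracy hypothesis, is likewise sound.
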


\begin{proof}
The ``only if'' statements are trivial. We only need to show the ``if'' statements.
The proofs of the right-handed claims are treated analogously and are therefore omitted.

(i)
Suppose that $rs=0$ for some nonzero $s\in R$.
Then, by Lemma~\ref{lem:RHbimodulemap}, $0 = \pi_H(rs)= r \pi_H(s)$.
Without loss of generality we may assume that
$\Supp(s) \cap H \neq \emptyset$ and thus $0 \neq \pi_H(s) \in R_H$.
For otherwise, we may
take some $g \in \Supp(s)$ and some nonzero $x_{g^{-1}} \in R_{g^{-1}}$
such that $e\in \Supp(s x_{g^{-1}})$.
Notice that $r(sx_{g^{-1}})=0$
and $\Supp(s x_{g^{-1}}) \cap H \neq \emptyset$.

(ii)
Suppose that $sr=1_R$ for some $s\in R$.
Then, by Lemma~\ref{lem:RHbimodulemap},
$1_{R_H}=\pi_H(1_R)=\pi_H(sr)=\pi_H(s)r$ in $R_H$.
\end{proof}

The following result highlights a crucial property of the rings appearing in Problem~\ref{prob:KapSTR}.

\begin{proposition}\label{prop:componentregular}
If the $G$-grading on $R$ is non-degenerate and $R_e$ is a domain, then the $G$-grading is fully component regular.
\end{proposition}

\begin{proof}
Take $g,h\in G$, $r_g \in R_g$ and $s_h \in R_h$.
Suppose that $r_g s_h=0$.
Then $R_{g^{-1}} r_g s_h R_{h^{-1}} = \{0\}$.
Using that $R_e$ is a domain, we get that
$R_{g^{-1}} r_g = \{0\}$ or $s_h R_{h^{-1}} = \{0\}$.
By the non-degeneracy of the grading we conclude that $r_g=0$ or $s_h=0$.
\end{proof}

\begin{remark}
There are plenty of group graded rings whose
gradings are non-degenerate but not necessarily strong.
For example crystalline graded rings \cite{NVO2008}, (nearly) epsilon-strongly graded rings \cite{NO2020,NOP2018}, and in particular Leavitt path algebras and crossed products by unital twisted partial actions.
However, it should be noted that an epsilon-strongly $G$-graded ring $R$, and in particular a partial crossed product, for which $R_e$ is a domain, is necessarily strongly graded by a subgroup of $G$.
\end{remark}

\begin{definition}
\begin{enumerate}[{\rm (a)}]

\item
The \emph{support of the $G$-grading on $R$} is defined as the set
\begin{displaymath}
	\Supp(R) :=\{g\in G \mid R_g \neq \{0\} \}.
\end{displaymath}

\item
If $\Supp(R)=G$, then the $G$-grading on $R$ is said to be \emph{fully supported}.
\end{enumerate}
\end{definition}

It is easy to see that any strong $G$-grading must be fully supported.
For a general $G$-grading, however, $\Supp(R)$ need not even be a subgroup of $G$.
As illustrated by Example~\ref{exmp:differenceNonDegFullyCompReg}(a), $\Supp(R)$ may fail to be a subgroup of $G$ even if the $G$-grading is fully component regular.
As the following result shows, we are in a rather fortunate situation.

\begin{proposition}\label{prop:supportsubgroup}
If the $G$-grading on $R$ is non-degenerate and $R_e$ is a domain,
then $\Supp(R)$ is a subgroup of $G$.
\end{proposition}

\begin{proof}
Take $g,h\in \Supp(R)$.
Using Proposition~\ref{prop:componentregular} we conclude that
$R_{gh}\supseteq R_g R_h \neq \{0\}$. Thus,
$gh\in \Supp(R)$.
Moreover, by the non-degeneracy of the grading it is clear that $R_{g^{-1}} \neq \{0\}$.
Thus, $g^{-1} \in \Supp(R)$.
This shows that $\Supp(R)$ is a subgroup of $G$.
\end{proof}

The following result follows immediately from
Proposition~\ref{prop:supportsubgroup} and
Proposition~\ref{prop:componentregular}.

\begin{corollary}\label{cor:fullyHgraded}
If the $G$-grading on $R$ is non-degenerate and $R_e$ is a domain,
then $R$ has a natural grading by the subgroup $H:=\Supp(R)$ of $G$.
This $H$-grading is non-degenerate and $R_{e_G}=R_{e_H}$ is a domain.
Moreover, the $H$-grading is fully supported and fully component regular.
\end{corollary}

\section{Unique product groups}\label{Sec:UPgroups}

In this section we will solve Problem~\ref{prob:KapSTR} for unique product groups (see Theorem~\ref{thm:UPGmain}).
Unique product groups were introduced by W. Rudin and H. Schneider \cite{RS1964} who called them
\emph{$\Omega$-groups}.

\begin{definition}
Let $G$ be a group.
\begin{enumerate}[{\rm (a)}]

\item
	$G$ is said to be a \emph{unique product group} if, given any two non-empty finite subsets $A$ and $B$ of $G$, there exists at least one element $g\in G$ which has a unique representation of the form $g = ab$ with $a \in A$ and $b \in B$.

\item
	$G$ is said to be a \emph{two unique products group} if, 
given any two non-empty finite subsets $A$ and $B$ of $G$ with $|A|+|B|>2$, 
there exist at least two distinct elements $g$ and $h$ of $G$ which have unique 
representations of the form $g = ab$, $h = cd$ with $a, c \in A$ and $b,d \in B$.
\end{enumerate}
\end{definition}

It is clear that every two unique products group is a unique product group. 
In 1980, Strojnowski showed that the two properties are in fact equivalent.

\begin{lemma}[Strojnowski \cite{Str1980}]\label{lemma:Strojnowski}
A group $G$ is a \emph{unique product group} if and only if it is a \emph{two unique products group}.
\end{lemma}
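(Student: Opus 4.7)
The plan is as follows. The reverse implication is immediate: in a two unique products group any pair $A,B$ with $|A|+|B|>2$ yields at least two (hence at least one) uniquely representable products, while if $|A|=|B|=1$ the single product $ab$ is trivially uniquely representable. So only the forward implication requires work. Assume henceforth that $G$ is a unique product group and fix non-empty finite $A,B \subseteq G$ with $|A|+|B|>2$; the goal is to produce two distinct uniquely representable elements of $AB$.

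I first dispose of the degenerate case $|A|=1$ (the case $|B|=1$ is symmetric). Writing $A=\{a\}$, every element of $AB$ has a unique representation $ab$ with $b \in B$, and $|B|\geq 2$ yields at least two uniquely representable elements. Thereafter I may assume $|A|\geq 2$ and $|B|\geq 2$. Applying the unique product hypothesis to $A$ and $B$ produces $a_0 \in A$, $b_0 \in B$ with $a_0 b_0$ uniquely representable. Passing from $(A,B)$ to $(a_0^{-1}A,\, Bb_0^{-1})$ preserves cardinalities and carries the combinatorial problem over bijectively, sending $a_0 b_0$ to $e$; so without loss of generality I may normalize to $e \in A \cap B$ with $e = e \cdot e$ being the unique representation of $e$ in $A \times B$.

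The heart of the argument is to produce a second uniquely representable element. Here the plan is to invoke the unique product property a second time, this time on the non-empty finite sets $A \setminus \{e\}$ and $B \setminus \{e\}$, obtaining $a^* \in A \setminus \{e\}$ and $b^* \in B \setminus \{e\}$ with $a^*b^*$ uniquely representable as a product from $(A \setminus \{e\}) \times (B \setminus \{e\})$. Note that $a^*b^* \neq e$: otherwise $e$ would have a second representation $a^*b^*$ with $a^* \neq e$, contradicting the normalization. In the larger $A \times B$ the only extra representations that $a^*b^*$ can acquire are $(e,\, a^*b^*)$ (possible only if $a^*b^* \in B$) and $(a^*b^*,\, e)$ (possible only if $a^*b^* \in A$); when neither occurs, $a^*b^*$ is the second uniquely representable element and the proof concludes.

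The main obstacle I anticipate is the exceptional case where $a^*b^* \in A \cup B$, so that one or both of those spurious representations arise. The plan for handling this is to iterate: either apply the unique product property once more to further refined sets (for instance, removing the offending element from the side in which it lies), or run an induction on $|A|+|B|$ via a minimal counterexample. In either route, the combinatorial constraints imposed by the normalization $e=e\cdot e$ and by the uniqueness of $a^*b^*$ in $(A \setminus \{e\}) \times (B \setminus \{e\})$ should force either a second uniquely representable element in $AB$ or an outright contradiction to $G$ being a unique product group. This case analysis is the technically delicate step, and in Strojnowski's original argument it is dispatched with a single clever choice of auxiliary sets rather than a long induction — identifying that clever choice is what I expect to take the most thought.
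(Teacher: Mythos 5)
The paper does not actually prove this lemma; it explicitly defers to Strojnowski's note \cite{Str1980}, so there is no in-paper argument to compare against. Your reductions up to the crux are all correct: the implication t.u.p.\ $\Rightarrow$ u.p.\ is immediate, the case $\min(|A|,|B|)=1$ is trivial, the translation $(A,B)\mapsto(a_0^{-1}A,\,Bb_0^{-1})$ legitimately normalizes the first unique product to $e=e\cdot e$ with $e\in A\cap B$, and your count of the possible spurious representations of $a^*b^*$ (only $(e,a^*b^*)$ when $a^*b^*\in B$, and $(a^*b^*,e)$ when $a^*b^*\in A$) is accurate. But the proposal stops exactly where the theorem lives: the case $a^*b^*\in A\cup B$ is not handled, and you concede as much. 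This is a genuine gap rather than a routine verification --- the reason the equivalence remained unclear between Rudin--Schneider (1964) and Strojnowski (1980) is precisely that no local re-application of the unique product property to punctured rectangles is known to close this case.

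Your two fallback plans do not obviously work, either. If you delete the offending element and reapply the hypothesis to, say, $(A\setminus\{e\})\times(B\setminus\{e,a^*b^*\})$, then a unique product of the smaller rectangle can acquire one spurious representation for \emph{each} deleted element upon returning to $A\times B$, so the set of cases to exclude grows at every step and there is no evident invariant forcing termination; a minimal-counterexample induction on $|A|+|B|$ faces the same obstruction, since unique products of sub-rectangles need not survive in the full rectangle. Strojnowski's actual argument is global rather than iterative: after the same normalization it applies the unique product hypothesis to the single set $C=BA$ and uses the identity $(b_1a_1)(b_2a_2)=b_1(a_1b_2)a_2$ to convert a second representation of the middle factor $a_1b_2$ in $A\times B$ into a second representation in $C\times C$. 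Without that (or some comparable) global idea, the proof cannot be completed along the lines you propose.
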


\begin{remark}
Every unique product group is necessarily torsion-free.
\end{remark}

We shall now state and prove the main result of this section,
and thereby simultaneously generalize
e.g. \cite[Thm.~12]{H1940},
\cite[Thm.~13]{H1940},
\cite[Thm.~3.2]{RS1964},
\cite[Prop.~3.6(a)]{Bell1987}
and \cite[Thm.~26.2]{Pas1971Book}.
Notice that, by Proposition~\ref{prop:componentregular}, the following result solves
Problem~\ref{prob:KapSTR} for unique product groups.

\begin{theorem}\label{thm:UPGmain}
Let $G$ be a unique product group and let $R$ be a unital $G$-graded ring
whose $G$-grading is fully component regular.
The following assertions hold:
\begin{enumerate}[{\rm (i)}]
	\item
	Every unit in $R$ is homogeneous;
	\item
	$R$ is a domain;
	\item
	Every idempotent in $R$ is trivial.
\end{enumerate}
\end{theorem}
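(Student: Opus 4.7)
The plan is to prove the three assertions in the order (ii), (i), (iii), since (iii) will fall out of (ii) for free and (i) requires the stronger two unique products formulation of the hypothesis on $G$.

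For (ii), I would take nonzero elements $a,b \in R$ and set $A = \Supp(a)$, $B = \Supp(b)$, both non-empty finite subsets of $G$. The unique product property supplies some $g \in G$ with a unique factorization $g = xy$, $x \in A$, $y \in B$. Writing $a = \sum_{s \in A} a_s$ and $b = \sum_{t \in B} b_t$, the $g$-component of $ab$ is
\begin{displaymath}
(ab)_g = \sum_{\substack{s \in A,\, t \in B \\ st = g}} a_s b_t = a_x b_y,
\end{displaymath}
and $a_x b_y \neq 0$ by full component regularity. Hence $ab \neq 0$, so $R$ is a domain.

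For (i), I would take a unit $a \in R$ with inverse $b$, so $ab = 1$, and again put $A = \Supp(a)$, $B = \Supp(b)$. Suppose, for contradiction, that $a$ is not homogeneous, so $|A| \geq 2$, in particular $|A| + |B| > 2$. Strojnowski's Lemma~\ref{lemma:Strojnowski} then provides two \emph{distinct} elements $g, h \in G$ each admitting a unique factorization within $A \cdot B$; repeating the computation from (ii), both $(ab)_g$ and $(ab)_h$ equal a single product of the form $a_s b_t$ with $a_s, b_t$ nonzero, and hence are nonzero by full component regularity. This forces $\Supp(ab)$ to contain two distinct elements, contradicting $ab = 1 \in R_e$. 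Hence $|A| = 1$ and $a$ is homogeneous. (The same argument applied to $ba = 1$ or simply by symmetry shows $b$ is homogeneous as well, though this is not needed for the statement.)

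For (iii), let $x \in R$ with $x^2 = x$. Then $x(x - 1) = 0$ in $R$, and since (ii) shows $R$ is a domain, either $x = 0$ or $x = 1$.

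The main obstacle is step (i): one must resist the natural temptation to look only at the top-degree and bottom-degree components (which would require ordering $G$ and is unavailable for general unique product groups) and instead invoke Strojnowski's strengthening. The hypothesis $|A| + |B| > 2$ is exactly what rules out the pathological case where $a$ and $b$ are both single-term, and Lemma~\ref{lemma:Strojnowski} is precisely calibrated to deliver two surviving components in $ab$ whenever $a$ is non-homogeneous.
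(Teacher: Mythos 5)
Your proposal is correct and follows essentially the same route as the paper: the plain unique product property for the domain claim, Strojnowski's two-unique-products lemma to rule out $|\Supp(a)|\geq 2$ for a unit, and the factorization $x(x-1_R)=0$ for idempotents. The only cosmetic difference is the order of the parts and that the paper also records the homogeneity of the inverse, which as you note is not needed for the statement itself.
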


\begin{proof}
	(i)
	Take $x,y\in R$ which satisfy $xy=1_R$.
	Put $A:=\Supp(x)$ and $B:=\Supp(y)$.
	By assumption $|A|$ and $|B|$ are positive.
	We want to show that $|A|=|B|=1$.
	Seeking a contradiction, suppose that
	$|A|>1$. Then $|A|+|B|>2$.
	Using that $G$ is a unique product group
	and hence, by Lemma~\ref{lemma:Strojnowski}, a two unique products group,
	there are two distinct elements $g,h \in AB$
	such that $g=ab$ and $h=cd$ with $a,c\in A$ and $b,d\in B$.
	We must have $x_a y_b =0$ or $x_c y_d =0$, since $|\Supp(xy)|=|\Supp(1_R)|=|\{e\}|=1$.
	But $R$ is fully component regular
	and hence neither of the two equalities can hold.
	This is a contradiction. We conclude that $|A|=1$, i.e. $x$ is homogeneous.
	From the equality $xy=1_R$ and the full component regularity of the grading,
	we get that $y$ is also homogeneous.
	
	(ii)
	Take two nonzero elements $x,y\in R$. Seeking a contradiction, suppose that $xy=0$.
	Using that $G$ is a unique product group, there is some $a\in \Supp(x)$
	and some $b\in \Supp(y)$ such that $x_a y_b =0$.
	By Proposition~\ref{prop:componentregular}, this is a contradiction.

	(iii)
	This follows from (ii), since $u^2=u \Leftrightarrow u(u-1_R)=0$.
\end{proof}

\begin{remark}
Notice that Theorem~\ref{thm:UPGmain}(ii) also holds if $R$ is non-unital.
We want to point out that B. Malman has already proved Theorem~\ref{thm:UPGmain}(ii)
in \cite[Lem.~3.13]{Mal2014}.
\end{remark}

\begin{remark}\label{rem:LeftRightInverses}
The proof of Theorem~\ref{thm:UPGmain}(i) yields a seemingly stronger conclusion than the one we aim to prove. But in fact, notice that for a $G$-graded ring $R$ with a fully component regular grading, the following three assertions are equivalent:
\begin{enumerate}
\item[{\rm (L)}]
Every left invertible element in $R$ is homogeneous;
\item[{\rm (R)}]
Every right invertible element in $R$ is homogeneous;
\item[{\rm (U)}]
Every unit in $R$ is homogeneous.
\end{enumerate}
\end{remark}

There is an abundance of classes of groups to which Theorem~\ref{thm:UPGmain} can be applied.

\begin{example}\label{ex:up-groups}
Typical examples of unique product groups are 
the diffuse groups (see \cite{Bow2000,KRD2016})
and in particular the right (or left) orderable groups,
including e.g.
all free groups,
all torsion-free nilpotent groups,
and hence all torsion-free abelian groups.
\end{example}

\begin{remark}\label{rem:nonUP}
For many years it was not known whether every torsion-free group necessarily had the unique product property.
However, in 1987 E. Rips and Y. Segev \cite{RS1987} presented an example of a torsion-free group without the unique product property.
Since then, a growing number of examples of torsion-free non-unique product groups have
surfaced (see e.g. \cite{AS2023,Car2014,GMS2015,Pro1988,Ste2015}).
\end{remark}

For an arbitrary torsion-free group $G$ we have that
$Z(G)$ is a torsion-free abelian group, and thus a unique product group.
Theorem~\ref{thm:UPGmain} now yields the following result.

\begin{corollary}\label{Cor:ZG}
Let $G$ be a torsion-free group and let $R$ be a unital $G$-graded ring
whose $G$-grading is non-degenerate.
If $R_e$ is a domain,
then every unit in $R_{Z(G)}$ is homogeneous,
$R_{Z(G)}$ is a domain
and every idempotent in $R_{Z(G)}$ is trivial.
\end{corollary}

\begin{corollary}\label{Cor:Rcomm}
Let $G$ be a torsion-free group and let $R$ be a unital commutative $G$-graded ring
whose $G$-grading is non-degenerate.
If $R_e$ is a domain,
then every unit in $R$ is homogeneous,
$R$ is an integral domain
and every idempotent in $R$ is trivial.
\end{corollary}

\begin{proof}
By Corollary~\ref{cor:fullyHgraded},
there is a torsion-free group $H$ such that
$R$ may be equipped with an $H$-grading
which is fully supported and fully component regular.
Take $g,h\in H$. There are nonzero homogeneous elements $r_g \in R_g$ and $r_h \in R_h$
such that $r_g r_h = r_h r_g \neq 0$. Thus, $R_{gh} \cap R_{hg} \neq \emptyset$ which yields $gh=hg$. This shows that $H$ is a torsion-free abelian group.
The result now follows from Theorem~\ref{thm:UPGmain}.
\end{proof}

Theorem~\ref{thm:UPGmain}, Corollary~\ref{Cor:ZG} and Corollary~\ref{Cor:Rcomm}
may be applied to $G$-crossed products, and in particular to group rings,
but more generally to strongly group graded rings.
We shall now apply the aforementioned theorem to a few examples of 
group graded rings whose gradings are (typically) not strong.

\begin{example}
(a)
Let $\F$ be a field
and consider \emph{the first Weyl algebra}
$R := \F\langle x,y \rangle/(yx-xy-1)$.
It is an easy exercise to show that $R$ is a domain,
but this elementary fact 
is also an immediate consequence of Theorem~\ref{thm:UPGmain}.
Indeed, notice that by assigning suitable degrees to the generators, $\degree(x):=1$ and $\degree(y):=-1$, $R$ becomes graded by the unique product group $(\Z,+)$.
Moreover, $R_0 = \F[xy]$ is a domain and the $\Z$-grading is non-degenerate.
Thus, the first Weyl algebra $R$ is a domain.

(b)
More generally, 
let $D$ be a ring,
let $\sigma := (\sigma_1,\ldots,\sigma_n)$ be a set of commuting automorphisms of $D$,
and let $a := (a_1,\ldots, a_n)$ be an $n$-tuple with nonzero entries from $Z(D)$ satisfying $\sigma_i(a_j)=a_j$ for $i\neq j$.
Given this data it is possible to define the corresponding
\emph{generalized Weyl algebra} $R:=D(\sigma,a)$ (see \cite{Bav1993} or \cite{NVO2008}).
One may show that $R$ is $\Z^n$-graded with $R_e = D$.
If $D$ is a domain, then the $\Z^n$-grading is non-degenerate
and Theorem~\ref{thm:UPGmain} yields that $R$ is a domain.
Thus, we have recovered \cite[Prop.~1.3(2)]{Bav1993}.

(c)
Any \emph{crystalline graded ring} $R := A \diamond_\sigma^\alpha G$
(see \cite{NVO2008}) is equipped with a non-degenerate $G$-grading with $R_e = A$.
If $A$ is a domain and $G$ is a unique product group, then 
$R$ is a domain by Theorem~\ref{thm:UPGmain}.
\end{example}

\section{Primeness}\label{Sec:Primeness}

As a preparation for Section~\ref{Sec:Hierarchy}, in this section we will give a sufficient condition for a ring $R$ graded by a torsion-free group $G$ to be prime (see Theorem~\ref{prop:primeness}).

Recall that a group $G$ is said to be an \emph{FC-group} if each $g\in G$ has only a finite number of conjugates in $G$. Equivalently, $G$ is an FC-group if $[G : C_G(g)]< \infty$ for each $g\in G$.
Given a group $G$, we define the subset
\begin{displaymath}
	\Delta(G):=\{g\in G \mid g \text{ has only finitely many conjugates in } G \}.
\end{displaymath}
It is not difficult to see that $\Delta(G)$ is a subgroup of $G$.

The following useful lemma can be shown in various ways (see e.g. \cite{Neu1951}).

\begin{lemma}[Neumann \cite{Neu1951}]\label{lem:neumann}
Every torsion-free FC-group is abelian.
\end{lemma}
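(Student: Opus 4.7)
The plan is to reduce the claim to showing that any two elements of $G$ commute, and then to exploit a classical finiteness result on commutator subgroups. Fix $a, b \in G$ and set $H := \langle a, b \rangle$. As a subgroup of a torsion-free group, $H$ is torsion-free; moreover, for every $x \in H$ one has $C_H(x) \supseteq C_G(x) \cap H$, and since $C_G(x)$ has finite index in $G$, its intersection with $H$ has finite index in $H$. Thus $H$ is itself a (finitely generated) torsion-free FC-group.

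Next I would identify the center of $H$. Because $H$ is generated by $a$ and $b$, an element lies in $Z(H)$ if and only if it commutes with both $a$ and $b$, so $Z(H) = C_H(a) \cap C_H(b)$. By the preceding paragraph both centralizers have finite index in $H$, hence so does $Z(H)$. In particular, $H/Z(H)$ is finite.

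At this point I would invoke Schur's theorem: whenever the central quotient of a group is finite, the commutator subgroup is finite. Applied to $H$, this yields that $[H, H]$ is a finite subgroup. Since $H$ is torsion-free, the only finite subgroup of $H$ is the trivial one, so $[H, H] = \{e\}$ and therefore $H$ is abelian. In particular $ab = ba$, and since $a, b \in G$ were arbitrary, $G$ is abelian.

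The only non-routine ingredient is Schur's theorem, which I would take as known; the main (and essentially only) obstacle is to verify that its hypothesis is satisfied by the $2$-generated subgroup $H$, which is exactly what the FC hypothesis buys us via the elementary centralizer calculation above. Everything else — passing to $\langle a, b \rangle$, and observing that a finite subgroup of a torsion-free group is trivial — is bookkeeping.
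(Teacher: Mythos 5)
Your argument is correct. The paper itself does not prove this lemma --- it simply cites Neumann's original article and remarks that the result ``can be shown in various ways'' --- so there is no in-paper proof to compare against. What you give is the standard reduction: for fixed $a,b\in G$ the subgroup $H=\langle a,b\rangle$ is torsion-free, the centralizers $C_H(a)\supseteq C_G(a)\cap H$ and $C_H(b)\supseteq C_G(b)\cap H$ have finite index in $H$ by the FC hypothesis, hence $Z(H)=C_H(a)\cap C_H(b)$ has finite index, Schur's theorem gives $[H,H]$ finite, and torsion-freeness forces $[H,H]=\{e\}$, so $ab=ba$. Every step checks out; the only imported ingredient is Schur's theorem, which is classical and correctly stated. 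One small remark: you do not actually need to verify that $H$ is an FC-group in its own right --- all you use is that the two centralizers $C_H(a)$ and $C_H(b)$ have finite index, which follows directly from $[G:C_G(a)],[G:C_G(b)]<\infty$ --- so that paragraph can be trimmed, but it does no harm.
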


The next lemma is used in the work of Passman
and can be shown by induction on the number of subgroups. 
We omit the proof and instead refer the reader to \cite[Lem.~1.2]{Pas1971Book}.

\begin{lemma}[Passman \cite{Pas1971Book}]\label{lem:FinIndSubgroup}
Let $L$ be a group and let $H_1, H_2, \ldots, H_n$ be a finite number of its subgroups. Suppose that there exists a finite collection of elements $s_{i,j} \in L$, for $i \in \{1,\ldots,n\}$ and $j \in \{1,\ldots,m\}$, such that $L = \cup_{i,j} H_i s_{i,j}$.
Then for some $k \in \{1,\ldots,n\}$ we have $[L: H_k ] < \infty$.
\end{lemma}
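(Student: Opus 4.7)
The plan is to argue by induction on the number $n$ of subgroups appearing in the covering. For the base case $n=1$, the hypothesis $L = \cup_j H_1 s_{1,j}$ exhibits $L$ as a finite union of at most $m$ right cosets of $H_1$, so $[L:H_1] \le m < \infty$.

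For the inductive step I would single out the last subgroup $H_n$. If $[L:H_n]<\infty$ we are done, so assume $[L:H_n]=\infty$. Since only finitely many distinct right cosets of $H_n$ can appear among $H_n s_{n,1}, \ldots, H_n s_{n,m}$, I can pick some $r \in L$ whose coset $H_n r$ avoids all of them. The coset $H_n r$ is then disjoint from $\cup_j H_n s_{n,j}$, and the original covering $L = \cup_{i,j} H_i s_{i,j}$ forces
\[
H_n r \;\subseteq\; \cup_{i<n}\,\cup_j\, H_i s_{i,j}.
\]

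The second step is to use this inclusion to eliminate $H_n$ from the covering. Right-multiplying the inclusion by $r^{-1}$ yields $H_n \subseteq \cup_{i<n,\,j} H_i s_{i,j} r^{-1}$, and further right-multiplying by each $s_{n,j'}$ gives $H_n s_{n,j'} \subseteq \cup_{i<n,\,j} H_i s_{i,j} r^{-1} s_{n,j'}$. Substituting these into the original covering shows that $L$ itself is a finite union of right cosets of $H_1, \ldots, H_{n-1}$. The inductive hypothesis then supplies an index $k \in \{1,\ldots,n-1\}$ with $[L:H_k]<\infty$, as required.

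The crux of the argument is the choice of $r$: this is where the assumption $[L:H_n]=\infty$ is essentially used, and it is precisely what allows us to trade one of the $n$ subgroups in the covering for extra cosets of the remaining $n-1$. Once $r$ is at hand, the reduction to $n-1$ subgroups is elementary coset bookkeeping, so pinpointing this reduction is the only conceptually non-routine step in the proof.
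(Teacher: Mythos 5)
Your proof is correct: it is the classical B.~Neumann/Passman induction on the number of subgroups, which is exactly the approach the paper alludes to (the paper itself omits the proof, noting only that the lemma ``can be shown by induction on the number of subgroups'' and deferring to Passman's book). The key step---using $[L:H_n]=\infty$ to find a coset $H_n r$ disjoint from the finitely many cosets $H_n s_{n,j}$ and then translating to eliminate $H_n$ from the covering---is the standard argument and is carried out correctly.
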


Now we use Passman's lemma to prove the next lemma which is crucial to this section.

\begin{lemma}\label{lem:FCprodsupport}
Let $G$ be a group and consider the subgroup $H:=\Delta(G)$.
Suppose that $F$ is a non-empty finite subset of $H$
and that $A, B$ are non-empty finite subsets of $G \setminus H$.
Let $f\in F$ and $h\in H$ be arbitrary.
There exists some
$g\in \cap_{f'\in F} C_G(f')$
such that $fh \notin g^{-1} A g B$.
\end{lemma}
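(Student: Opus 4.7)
The plan is to argue by contradiction. Assume that for every $g \in L := \bigcap_{f' \in F} C_G(f')$ one has $fh \in g^{-1} A g B$. Rewriting, for each such $g$ there exist $a \in A$ and $b \in B$ with $gfh = agb$, which is equivalent to $g(fhb^{-1})g^{-1} = a$. Set $c_b := fhb^{-1}$ for $b \in B$; since $fh \in H$ while $b \in G \setminus H$ and $H$ is a subgroup of $G$, each $c_b$ lies in $G \setminus H$. The assumption therefore reads
\[ L = \bigcup_{a \in A,\, b \in B} S_{a,b}, \qquad S_{a,b} := \{g \in L : g c_b g^{-1} = a\}. \]

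The key observation is that each non-empty $S_{a,b}$ is a single left coset of the subgroup $C_L(c_b) := C_G(c_b) \cap L$ of $L$, because any two conjugators sending $c_b$ to $a$ differ by an element of $C_G(c_b)$, and both lie in $L$. Hence $L$ is covered by finitely many left cosets of the finite family of subgroups $\{C_L(c_b) : b \in B\}$. Passing to inverses converts these into finitely many right cosets of the same subgroups, so Lemma~\ref{lem:FinIndSubgroup} applied inside $L$ yields some $b_0 \in B$ with $[L : C_L(c_{b_0})] < \infty$.

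Now I would chain the finite-index estimates. Each $f' \in F$ lies in $\Delta(G)$, so $[G : C_G(f')] < \infty$; as $F$ is finite this forces $[G : L] < \infty$. Combined with $[L : C_L(c_{b_0})] < \infty$ and the inclusion $C_L(c_{b_0}) \subseteq C_G(c_{b_0})$, this yields $[G : C_G(c_{b_0})] < \infty$, i.e. $c_{b_0} \in \Delta(G) = H$, contradicting $c_{b_0} \in G \setminus H$ established above.

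The main obstacle I anticipate is the reformulation step: recognising the equation $gfh = agb$ as the conjugation equation $g c_b g^{-1} = a$, and then being careful to verify that its solutions within $L$ form a coset of the centralizer \emph{taken inside $L$} (not merely inside $G$), so that Lemma~\ref{lem:FinIndSubgroup} becomes directly applicable to $L$. After that reformulation, the remaining steps are a routine chain of finite-index inequalities together with the subgroup property of $H$.
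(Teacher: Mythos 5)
Your proof is correct and takes essentially the same route as the paper's: a contradiction argument that covers $L=\cap_{f'\in F}C_G(f')$ by finitely many cosets of centralizers, invokes Lemma~\ref{lem:FinIndSubgroup}, and then chains finite-index estimates to force an element of $G\setminus\Delta(G)$ into $\Delta(G)$. The only (immaterial) difference is that you use left cosets of the centralizers of the elements $c_b=fhb^{-1}$ indexed by $B$, whereas the paper uses right cosets of the centralizers of the elements of $A$; on the non-empty solution sets these subgroups are conjugate, so the two arguments coincide.
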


\begin{proof}
Put $L := \cap_{f' \in F} C_G(f')$ and suppose that $A = \{a_1, \ldots, a_n\}$
and $B = \{b_1, \ldots, b_m\}$. Let $f\in F$ and $h\in H$ be arbitrary.
Seeking a contradiction, suppose that $fh \in g^{-1 }A g B$ for all $g\in L$.
For each $i\in \{1,\ldots,n\}$ we define the subgroup $H_i := L \cap C_G(a_i)$.

For $(i,j)\in \{1,\ldots,n\} \times \{1,\ldots,m\}$, if
$a_i$ is conjugate to $fh b_j^{-1}$ by an element of $L$,
then choose $s_{i,j} \in L$ such that
$s_{i,j}^{-1} a_i s_{i,j} = fh b_j^{-1}$.
Otherwise, choose $s_{i,j} = e$.
Notice that $L \supseteq \cup_{i,j} H_i s_{i,j}$.

Now, let $g\in L$ be arbitrary. 
By our assumption, there exist $i$ and $j$ such that
$fh = g^{-1} a_i g b_j$.
That is, $fh b_j^{-1} = g^{-1} a_i g = s_{i,j}^{-1} a_i s_{i,j}$.
From the last equality we get that $g s_{i,j}^{-1} \in L \cap C_G(a_i) = H_i$.
Thus, $g \in H_i s_{i,j}$.
Since $g$ was arbitrarily chosen,
it is clear that $L \subseteq \cup_{i,j} H_i s_{i,j}$.
This shows that $L = \cup_{i,j} H_i s_{i,j}$.

By Lemma~\ref{lem:FinIndSubgroup} there is
some $k\in \{1,\ldots,n\}$ such that $[L: H_k] < \infty$.
Consider the chains of subgroups
$G \supseteq L \supseteq H_k$ and $G \supseteq C_G(a_k) \supseteq H_k$.
Recall that $[G : C_G(r)] < \infty$ for each $r \in F$.
Thus, $[G : L] < \infty$ and hence $[G : H_k] < \infty$.
This shows that $[G : C_G(a_k)] < \infty$.
But this is a contradiction, since $a_k \in G \setminus H$.
\end{proof}

\begin{theorem}\label{prop:primeness}
Let $G$ be a torsion-free group
and let $R$ be a $G$-graded ring.
If the $G$-grading on $R$ is
non-degenerate and $R_e$ is a domain,
then $R$ is a prime ring.
\end{theorem}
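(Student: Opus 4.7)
The plan is to argue by contradiction: I would suppose that nonzero $a, b \in R$ satisfy $aRb = 0$ and derive a contradiction by a careful analysis relative to $H := \Delta(G)$. By Corollary~\ref{cor:fullyHgraded}, I first replace $G$ by $\Supp(R)$, so WLOG the gradation is fully supported and fully component regular. Using non-degeneracy to left-multiply $a$ (respectively right-multiply $b$) by a suitable homogeneous element, I arrange $a_e \neq 0$ and $b_e \neq 0$ while preserving $aRb = 0$ and the support sizes. The key structural input is that $H$ is torsion-free abelian by Lemma~\ref{lem:neumann}, hence a unique product group; applying Theorem~\ref{thm:UPGmain} to the inherited non-degenerate $H$-grading on $R_H$ shows that $R_H$ is a domain.

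Decompose $a = a_H + a'$ and $b = b_H + b'$ with $a_H = \pi_H(a)$ and $a'$ supported in $G \setminus H$ (similarly for $b$); by the shifting step, $a_H, b_H \neq 0$. The easy case is $a \in R_H$: Lemma~\ref{lem:RHbimodulemap} yields $\pi_H(arb) = ar\,b_H$ for every $r \in R_H$, so $aR_H b_H = 0$, and specialising $r = 1_R$ contradicts $R_H$ being a domain. Symmetrically for $b \in R_H$. Hence I may assume $A' := \Supp(a')$ and $B' := \Supp(b')$ are both nonempty subsets of $G \setminus H$.

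Set $A_H = \Supp(a_H)$ and $B_H = \Supp(b_H)$. If $|A_H| = 1$ (symmetrically $|B_H| = 1$), I examine the degree-$g$ component of $ar_g b = 0$ for $r_g \in R_g$. The pair $(e,e) \in A \times B$ always contributes $a_e r_g b_e$, while any other contributing pair $(\alpha, \beta)$ has $\alpha \in A' \subseteq G \setminus H$ and satisfies $g^{-1} \alpha g = \beta^{-1}$; the set of "bad" $g$ for which some such contribution exists is a finite union of cosets of centralizers $C_G(\alpha)$ with $\alpha \notin H$, each of infinite index. Lemma~\ref{lem:FinIndSubgroup} then forces this union to be proper, so I find $g$ with $R_g \neq 0$ making $(ar_g b)_g = a_e r_g b_e = 0$, which contradicts full component regularity.

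The main obstacle is the case $|A_H|, |B_H| \geq 2$, where the $H$-components are "fat". Here I fix a bi-invariant order on the torsion-free abelian group $H$ and carefully choose $\alpha^* \in A_H \setminus \{e\}$ and $h^* \in B_H$ so that $\alpha^{-1}\alpha^* h^* \notin B_H$ for every $\alpha \in A_H \setminus \{\alpha^*\}$; taking $\alpha^* = \max A_H$, $h^* = \max B_H$ (or the analogous minima if $\max A_H = e$) suffices by a short order argument. Applying Lemma~\ref{lem:FCprodsupport} with $F = A_H \setminus \{e\}$, $A = A'$, $B = B'$, $f = \alpha^*$, $h = h^*$ produces $g \in \bigcap_{\alpha \in A_H \setminus \{e\}} C_G(\alpha)$ with $\alpha^* h^* \notin g^{-1} A' g B'$. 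For nonzero $r_g \in R_g$ and $r_{g^{-1}} \in R_{g^{-1}}$, the element $r_{g^{-1}}(ar_g b)$ vanishes, and I would analyze its degree-$\alpha^* h^*$ component by splitting $A \times B$ into four types. The $A_H \times B'$ and $A' \times B_H$ contributions vanish because $H$ is normal and $\alpha^* h^* \in H$ while the ambient degrees lie in $G \setminus H$; the $A' \times B'$ contribution vanishes by Lemma~\ref{lem:FCprodsupport}; and the $A_H \times B_H$ contribution collapses to the single pair $(\alpha^*, h^*)$ by the order choice (using that $g$ centralizes $A_H$, so $g^{-1}\alpha g = \alpha$ there). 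The surviving term $r_{g^{-1}} a_{\alpha^*} r_g b_{h^*}$ is nonzero by full component regularity, yielding the desired contradiction.
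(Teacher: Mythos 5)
Your proof is correct and follows essentially the same route as the paper's: decompose supports relative to $H=\Delta(G)$, use Neumann's lemma and Theorem~\ref{thm:UPGmain} to make $R_H$ a domain, conjugate by $g$ ranging over the intersection of centralizers, and invoke Lemma~\ref{lem:FCprodsupport} to isolate a nonzero homogeneous component of $r_{g^{-1}}ar_gb$; your explicit ordering argument that pins the surviving term to the single pair $(\alpha^*,h^*)$ is precisely the justification the paper leaves implicit when it asserts $fh\in\Supp(x'y')$ for ``some'' $f$ and $h$. The only slip is the appeal to $r=1_R$ in the case $a\in R_H$ --- the theorem does not assume $R$ unital --- but any nonzero $r\in R_e$ does the same job there since $R_H$ is a domain.
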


\begin{proof}
By Corollary~\ref{cor:fullyHgraded},
$G':=\Supp(R)$ is a torsion-free subgroup of $G$.
Moreover, $R$ can be equipped with a non-degenerate and fully supported $G'$-grading.
Thus, we will without loss of generality assume that the $G$-grading on $R$ is fully supported.

Put $H := \Delta(G)$.
Notice that $H$ is a torsion-free FC-group.
Thus, by Lemma~\ref{lem:neumann}, $H$ is torsion-free abelian.
Theorem~\ref{thm:UPGmain} now yields that $R_H$ is a domain.

Seeking a contradiction, suppose that there are nonzero ideals $I$ and $J$ of $R$ such that $I\cdot J=\{0\}$.
By Proposition~\ref{prop:componentregular}, the $G$-grading on $R$ is fully component regular. Using this and Lemma~\ref{lem:RHbimodulemap},
it is clear that $\pi_H(I)$ and $\pi_H(J)$ are nonzero ideals of $R_H$.

Choose some
$x\in I$ and $y\in J$ such that $\pi_H(x)\neq 0$ and $\pi_H(y)\neq 0$.
Put $x':=\pi_H(x)$ and $x'':=x-x'$.
Notice that
$F:=\Supp(x') \subseteq H$ and $A:=\Supp(x'') \subseteq G \setminus H$.
Similarly, put $y':=\pi_H(y)$ and $y'':=y-y'$,
and notice that $\Supp(y') \subseteq H$ and $B:=\Supp(y'') \subseteq G \setminus H$.

Choose some $f\in F$ and $h\in \Supp(y')$.
Put $L := \cap_{f' \in F} C_G(f')$ and let $g\in L$ be arbitrary.
Choose some nonzero elements $r_g \in R_g$ and $r_{g^{-1}} \in R_{g^{-1}}$.
Then $r_{g^{-1}} x r_g \subseteq I$ and thus
\begin{align*}
0=r_{g^{-1}} x r_g \cdot y &= (r_{g^{-1}} x' r_g + r_{g^{-1}} x'' r_g) ( y' + y'') \\
&= r_{g^{-1}} x' r_g y' + r_{g^{-1}} x'' r_g y'
+ r_{g^{-1}} x' r_g y'' + r_{g^{-1}} x'' r_g y''.
\end{align*}
Now, by combining the facts that the $G$-grading on $R$ is fully supported and fully component regular, and that $R_H$ is a domain,
it is not difficult to see that
$r_{g^{-1}} x' r_g y' \neq 0$.
In fact,
$fh \in \Supp(r_{g^{-1}} x' r_g y') = \Supp(x' y') \subseteq H$.
Using that $H$ is a subgroup of $G$ which is closed under conjugation,
we notice that
$\Supp(r_{g^{-1}} x'' r_g y')\cap H = \emptyset$
and
$\Supp(r_{g^{-1}} x' r_g y'') \cap H = \emptyset$
and hence we must have
$fh \in \Supp(r_{g^{-1}} x'' r_g y'') = g^{-1} A g B$.
But $g \in L$ may be chosen arbitrarily, and thus Lemma~\ref{lem:FCprodsupport} yields a contradiction.
This shows that $R$ is prime.
\end{proof}

If $R$ is a unital ring and $x,y \in Z(R)$ are nonzero elements satisfying $x y =0$,
then $I:=x R$ and $J:=y R$ are nonzero ideals of $R$ such that $I \cdot J = \{0\}$.
Thus, we obtain the following corollary which
generalizes a conclusion which,
using results of R. G. Burns \cite{Bur1970}, is already well-known for group rings.

\begin{corollary}\label{cor:CenterDomain}
Let $G$ be a torsion-free group
and let $R$ be a unital $G$-graded ring.
If the $G$-grading on $R$ is
non-degenerate and $R_e$ is a domain,
then $Z(R)$ is an integral domain.
In particular, 
every central idempotent in $R$ is trivial.
\end{corollary}

\section{A potential hierarchy between the three problems}\label{Sec:Hierarchy}

For group rings (cf. Problem~\ref{prob:KapGR}) it is well-known
that an affirmative answer to the unit conjecture, for a fixed field $K$, would yield
an affirmative answer to the zero-divisor conjecture, which in turn
would yield an affirmative answer to the idempotent conjecture (see e.g. \cite[p.~12]{Val2002}). However, 
it is not known whether two (or all) of the three conjectures are equivalent.

In this section we will use the main result from Section~\ref{Sec:Primeness} to show that the corresponding problems for group graded rings (see Problem~\ref{prob:KapSTR}) exhibit the same potential hierarchy (see Theorem~\ref{thm:hierarchy}).
We begin by showing the following generalization of \cite[Lem.~13.1.2]{Pas1977Book}.

\begin{proposition}\label{prop:ReducedDomain}
Let $G$ be a torsion-free group and let $R$ be a $G$-graded ring whose $G$-grading is non-degenerate.
Then $R$ is a domain if and only if $R$ is reduced and $R_e$ is a domain.
\end{proposition}

\begin{proof}
The ``only if'' statement is trivial. We proceed by showing the ``if'' statement.
To this end, suppose that $R_e$ is a domain and that $R$ is not a domain. We need to show that $R$ is not reduced.
By Theorem~\ref{prop:primeness}, we conclude that $R$ is a prime ring.
Choose some nonzero elements $x,y \in R$ which satisfy $xy=0$.
By primeness of $R$ we have $y R x \neq \{0\}$.
Notice that $(yRx)^2 \subseteq y R xy R x = \{0\}$.
Thus, there is some nonzero $z \in yRx$ which satisfies $z^2=0$.
This shows that $R$ is not reduced.
\end{proof}

We are now ready to state and prove the main result of this section.
It generalizes e.g. \cite[Lem.~13.1.2]{Pas1977Book}.
See also \cite[Rem.~1.1]{Val2002}.

\begin{theorem}\label{thm:hierarchy}
Let $G$ be a torsion-free group
and let $R$ be a unital $G$-graded ring.
Furthermore, suppose that the $G$-grading on $R$ is
non-degenerate and that $R_e$ is a domain.
Consider the following assertions:
\begin{enumerate}[{\rm (i)}]
	\item
	Every unit in $R$ is homogeneous;
	\item
	$R$ is reduced;
	\item
	$R$ is a domain;
	\item
	Every idempotent in $R$ is trivial.
\end{enumerate}
Then {\rm (i)}$\Rightarrow${\rm (ii)}$\Rightarrow${\rm (iii)}$\Rightarrow${\rm (iv)}.
Moreover, {\rm (iii)}$\Rightarrow${\rm (ii)}.
\end{theorem}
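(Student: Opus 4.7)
The plan is to dispatch the three easy implications first and then focus on the one substantive step. The implication (iii)$\Rightarrow$(ii) is immediate since every domain is reduced; (ii)$\Rightarrow$(iii) is the already-proved Proposition~\ref{prop:ReducedDomain}; and (iii)$\Rightarrow$(iv) uses the standard trick that an idempotent $u$ satisfies $u(u-1_R)=0$, so in a domain $u\in\{0,1_R\}$. With these in hand, the only real work is (i)$\Rightarrow$(ii).

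For (i)$\Rightarrow$(ii), I would argue by contradiction: suppose some nonzero $x\in R$ satisfies $x^2=0$. Then $1_R+x$ is a unit with inverse $1_R-x$ (since $(1_R+x)(1_R-x)=1_R-x^2=1_R$), so by (i) there is some $g\in G$ with $1_R+x\in R_g$. I would then split into two cases via the uniqueness of the homogeneous decomposition.

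Case $g=e$: Then $x = (1_R+x) - 1_R$ lies in $R_e$, and since $R_e$ is a domain this forces $x = 0$, contradicting $x\neq 0$. Case $g\neq e$: Comparing components of $1_R+x$ with those of $x$ forces $x_e = -1_R$, $x_h = 0$ for $h\notin\{e,g\}$, and $y := x_g = 1_R+x$, with $y\neq 0$ (else $x=-1_R$ and $x^2=1_R\neq 0$). Thus $x = y - 1_R$ with $y\in R_g\setminus\{0\}$, and expanding $0 = x^2$ yields $y^2 - 2y + 1_R = 0$. Reading off the $e$-component: since $y^2\in R_{g^2}$ and $y_e = 0$, this collapses to $(y^2)_e + 1_R = 0$. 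If $g^2\neq e$, then $(y^2)_e = 0$ and we get $1_R = 0$, absurd as $R_e$ is a nonzero domain. If $g^2 = e$, torsion-freeness of $G$ forces $g = e$, again a contradiction.

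The main obstacle is really just organizing the second case cleanly; the essential ingredients are uniqueness of the homogeneous decomposition, torsion-freeness of $G$ (to kill $g^2 = e$ when $g\neq e$), and the fact that $R_e$ is nonzero (which rules out $1_R = 0$). No appeal to the primeness theorem is needed for (i)$\Rightarrow$(ii); the results from Section~\ref{Sec:Primeness} enter only through Proposition~\ref{prop:ReducedDomain} in the step (ii)$\Rightarrow$(iii).
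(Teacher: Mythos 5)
Your proof is correct, and the three easy implications ((iii)$\Rightarrow$(ii), (ii)$\Rightarrow$(iii) via Proposition~\ref{prop:ReducedDomain}, and (iii)$\Rightarrow$(iv) via $u(u-1_R)=0$) match the paper exactly. Where you genuinely diverge is in (i)$\Rightarrow$(ii). The paper also starts from $x^2=0$ and the unit $1_R-x\in R_g$, but it then passes to the subring $R_H$ with $H=\langle g\rangle$, observes that $H$ is either trivial or infinite cyclic (hence orderable, hence a unique product group), and invokes Theorem~\ref{thm:UPGmain}(ii) to conclude that $R_H$ is a domain containing $x$. You instead carry out a direct homogeneous-component computation: forcing $x_e=-1_R$, $x=y-1_R$ with $y\in R_g$, expanding $0=y^2-2y+1_R$, and reading off the $e$-component to get $1_R=0$ unless $g^2=e$, which torsion-freeness excludes for $g\neq e$. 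Your route is more elementary and self-contained for this step --- it uses only uniqueness of the graded decomposition, torsion-freeness, and $1_R\neq 0$, with no appeal to unique product groups or Strojnowski's lemma --- whereas the paper's version is shorter to state given that Theorem~\ref{thm:UPGmain} is already in hand, and it packages the case analysis into a single ``$R_H$ is a domain'' claim. (Note that your computation does not even use non-degeneracy of the gradation, only that $R_e$ is a domain in the case $g=e$.) Your closing remark that the primeness machinery of Section~\ref{Sec:Primeness} enters only through Proposition~\ref{prop:ReducedDomain} in the step (ii)$\Rightarrow$(iii) is accurate and consistent with the paper.
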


\begin{proof}
(i)$\Rightarrow$(ii)
Suppose that every unit in $R$ is homogeneous.
Let $x\in R$ be an element which satisfies $x^2=0$.
Notice that $(1_R+x)(1_R-x)=(1_R-x)(1_R+x)=1_R$.
This shows that $1_R-x$ is a unit in $R$, and hence by
assumption $1_R-x \in R_g$ for some $g\in G$.
Put $r_g : = 1_R-x$ and $H:=\langle g \rangle$, the subgroup of $G$ generated by $g$.
Consider the subring $R_H$ whose $H$-grading is non-degenerate. 
Using that $1_R \in R_e$, we notice that $x = 1_R - r_g \in R_H$.
We claim that $R_H$ is a domain.
If we assume that the claim holds, 
then $x^2=0$ implies $x=0$ and we are done.
Now we show the claim.

Case 1 ($g=e$): By assumption, $R_H = R_e$ is a domain.

Case 2 ($g\neq e$): $H$ is an infinite cyclic group which can be ordered.
The desired conclusion follows from Theorem~\ref{thm:UPGmain}(ii).

(ii)$\Leftrightarrow$(iii) This follows from Proposition~\ref{prop:ReducedDomain}.

(iii)$\Rightarrow$(iv) This is trivial.
\end{proof}

In Section~\ref{Sec:Central}, we will record an alternative proof of (ii)$\Rightarrow$(iv) in the above theorem 
(see Corollary~\ref{cor:altproofReducedIndec}).

\begin{remark}
If $u=u^2 \in R$ is an idempotent, then $(1_R - 2u)^2 = 1_R - 4u + 4u = 1_R$.
Thus, if $2$ is invertible in $R_e$, then one can directly, without invoking a primeness argument, show that (i)$\Rightarrow$(iv) in Theorem~\ref{thm:hierarchy} by proceeding as in the proof of (i)$\Rightarrow$(ii).
\end{remark}

For a $G$-graded ring $R$ to be a domain, it is obviously also necessary for $R_e$ to be a domain.
However, as the following example shows it is possible for $R$ to have only homogeneous units without $R_e$ being a domain.

\begin{example}\label{ex:Steinberg}
If $G$ is a unique product group and $A$ is a unital commutative ring,
then the group ring $A[G]$ has only trivial units if and only if $A$ is reduced and indecomposable (see \cite[Prop.~2.1]{Steinberg2019}).

Let $G$ be a unique product group and let $A=C^\infty(\R)$ be the algebra of all smooth functions $\R \to \R$ with pointwise addition and multiplication.
Notice that $A$ is not a domain. However, $A$ is reduced and indecomposable.
Thus, $A[G]$ has only trivial units.
\end{example}

\begin{remark}\label{rem:quaternions}
(a)
While torsion-freeness of $G$ is clearly a necessary condition for a group ring $K[G]$ to be a domain,
this is not the case for strongly $G$-graded rings in general.
Indeed, consider for instance
the real quaternion algebra $\mathbb{H}$ which is a division ring,
and which is strongly graded by the finite group $\Z/2\Z \times \Z/2\Z$.

(b)
Example~\ref{ex:Steinberg} shows that
Theorem~\ref{thm:hierarchy}
fails to hold if the assumption on $R_e$ is dropped.

(c) Non-degeneracy of the grading is not a necessary condition
for a group graded ring to be a domain. To see this, consider e.g.
Example~\ref{exmp:differenceNonDegFullyCompReg}(a).
\end{remark}

\section{Central elements}\label{Sec:Central}

The aim of this section is to obtain a strengthening of Corollary~\ref{cor:CenterDomain} by completely solving Problem~\ref{prob:KapSTR} for central elements (see Theorem~\ref{thm:central}).

\begin{proposition}\label{prop:centralFCsupport}
Let $G$ be a group and let $R$ be a unital $G$-graded ring
whose $G$-grading is non-degenerate. Furthermore, suppose that $R_e$ is a domain and that the $G$-grading is fully supported.
If $x$ is a central element in $R$, then the subgroup of $G$ generated by $\Supp(x)$ is an FC-group.
\end{proposition}

\begin{proof}
Let $x = \sum_{g\in G} x_g$ be a central element in $R$.
Take $s\in G$. Choose some nonzero $r_s \in R_s$ and,
using Proposition~\ref{prop:componentregular}, 
notice that
\begin{displaymath}
	s \Supp(x) = \Supp(r_s x) = \Supp(x r_s) = \Supp(x) s.
\end{displaymath}
This shows that $\Supp(x)$ is closed under conjugation by elements of $G$.
Thus, by finiteness of $\Supp(x)$, we get that $\Supp(x) \subseteq \Delta(G)$.
Let $H$ be the subgroup of $G$ generated by $\Supp(x)$.
Using that $H$ is finitely generated, we conclude that $H$ is an FC-group.
\end{proof}

We now state and prove the main result of this section.

\begin{theorem}\label{thm:central}
Let $G$ be a torsion-free group
and let $R$ be a unital $G$-graded ring.
If the $G$-grading on $R$ is
non-degenerate and $R_e$ is a domain,
then the following assertions hold:
\begin{enumerate}[{\rm (i)}]
	\item
	Every central unit in $R$ is homogeneous;
	\item
	$R$ has no non-trivial central zero-divisor;
	\item
	$R$ is indecomposable, i.e. every central idempotent in $R$ is trivial.
\end{enumerate}
\end{theorem}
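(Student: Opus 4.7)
The plan is to reduce each of the three assertions about a central element $x \in R$ to the corresponding statement inside a graded subring $R_H$, where $H$ is a torsion-free abelian (hence unique product) subgroup of $G$, so that Theorem~\ref{thm:UPGmain} applies directly. As a preliminary simplification, by Corollary~\ref{cor:fullyHgraded} we may replace $G$ by $\Supp(R)$ and assume without loss of generality that the $G$-gradation on $R$ is fully supported (in addition to being non-degenerate with $R_e$ a domain). This is what is needed in order to invoke Proposition~\ref{prop:centralFCsupport}.

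For a central element $x \in Z(R)$, Proposition~\ref{prop:centralFCsupport} tells us that $H := \langle \Supp(x) \rangle$ is an FC-subgroup of $G$; since $G$ is torsion-free, Lemma~\ref{lem:neumann} forces $H$ to be torsion-free abelian, and in particular a unique product group. The induced $H$-gradation on $R_H$ is still non-degenerate (the defining condition for $h \in H$ involves only $R_{h^{-1}}$, which lives inside $R_H$), and $(R_H)_{e} = R_e$ is a domain. Hence Theorem~\ref{thm:UPGmain} applies to $R_H$.

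For (i), assume $x$ is a central unit in $R$. Then $x \in R_H$, and Lemma~\ref{lem:zdunitsRHR}(ii) promotes $x$ to a unit of $R_H$; Theorem~\ref{thm:UPGmain}(i) then forces $x$ to be homogeneous in $R_H$, hence in $R$. For (ii), if $x$ were a non-zero central zero-divisor of $R$, the same construction gives $x \in R_H$, and Lemma~\ref{lem:zdunitsRHR}(i) makes $x$ a zero-divisor in $R_H$, contradicting the fact that $R_H$ is a domain by Theorem~\ref{thm:UPGmain}(ii). For (iii), if $u = u^2 \in Z(R)$ then $u(u-1_R) = 0$; a non-trivial such $u$ would satisfy $u \neq 0$ and $u - 1_R \neq 0$, providing a non-trivial central zero-divisor and contradicting (ii).

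The only delicate point is the transfer between $R$ and $R_H$: one must verify that the $H$-gradation on $R_H$ inherits non-degeneracy from the $G$-gradation on $R$, and that units and zero-divisors in $R$ remain so in $R_H$, which is exactly the content of Lemma~\ref{lem:zdunitsRHR}. Once the subgroup $H$ generated by $\Supp(x)$ has been identified as torsion-free abelian via Proposition~\ref{prop:centralFCsupport} combined with Lemma~\ref{lem:neumann}, the rest of the argument is a direct appeal to Theorem~\ref{thm:UPGmain} and so requires no further work.
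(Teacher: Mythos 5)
Your proposal is correct and follows essentially the same route as the paper: reduce to the fully supported case via Corollary~\ref{cor:fullyHgraded}, use Proposition~\ref{prop:centralFCsupport} together with Neumann's lemma to see that $H=\langle\Supp(x)\rangle$ is torsion-free abelian, transfer units and zero-divisors between $R$ and $R_H$ via Lemma~\ref{lem:zdunitsRHR}, and conclude with Theorem~\ref{thm:UPGmain}. Your explicit check that the $H$-gradation on $R_H$ inherits non-degeneracy is a small but welcome addition that the paper leaves implicit.
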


\begin{proof}
By Corollary~\ref{cor:fullyHgraded},
$G':=\Supp(R)$ is a torsion-free subgroup of $G$.
Moreover, $R$ can be equipped with a non-degenerate and fully supported $G'$-grading.
Thus, we will without loss of generality assume that the $G$-grading on $R$ is fully supported.

(i)
Let $x \in Z(R)$ be a unit in $R$.
Denote by $H$ the subgroup of $G$ generated by $\Supp(x)$.
Notice that, by Lemma~\ref{lem:zdunitsRHR}(ii), $x$ is a unit in $R_H$.
By Proposition~\ref{prop:centralFCsupport}, $H$ is a torsion-free FC-group.
Thus, using Lemma~\ref{lem:neumann}
and Theorem~\ref{thm:UPGmain}
we conclude that $x$ is homogeneous.

(ii)
Let $x\in Z(R)$ be nonzero.
Suppose that $xy=0$ for some $y\in R$.
Denote by $H$ the subgroup of $G$ generated by $\Supp(x)$.
By Proposition~\ref{prop:centralFCsupport}, $H$ is a torsion-free FC-group.
Using Lemma~\ref{lem:neumann}
and Theorem~\ref{thm:UPGmain}
we conclude that $R_H$ is a domain.
Thus, $x$ is not a zero-divisor in $R_H$
and by Lemma~\ref{lem:zdunitsRHR}(i) we conclude that $y=0$.

(iii)
This follows from (ii) or from Corollary~\ref{cor:CenterDomain}.
\end{proof}

\begin{remark}
(a)
Malman has essentially
proved Theorem~\ref{thm:central}(ii)
in \cite[Prop.~3.14]{Mal2014}.
Thanks to Neumann's lemma (Lemma~\ref{lem:neumann})
our proof is shorter.

(b)
Notice that we can immediately recover Corollary~\ref{Cor:Rcomm} from
Theorem~\ref{thm:central}.
\end{remark}

We record the following well-known lemma.

\begin{lemma}\label{lem:ReducedCentral}
If $R$ is a reduced ring, then every idempotent in $R$ is central in $R$.
\end{lemma}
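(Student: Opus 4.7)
The plan is to take an arbitrary idempotent $e \in R$ (so $e^2 = e$) and an arbitrary element $r \in R$, and show that $er = re$ by exhibiting $er - ere$ and $re - ere$ as square-zero elements; since $R$ is reduced, both must vanish, giving $er = ere = re$.

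First I would compute $(er - ere)^2$ by expanding and repeatedly applying $e^2 = e$. The four terms are $er \cdot er$, $-er \cdot ere$, $-ere \cdot er$, and $ere \cdot ere$, which simplify to $erer$, $-erere$, $-erer$, and $erere$ respectively (the middle $ee$'s collapse to $e$). These cancel in pairs, so $(er - ere)^2 = 0$. Being reduced, $R$ then forces $er - ere = 0$, i.e.\ $er = ere$.

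Next I would run the symmetric computation for $(re - ere)^2$, expanding into $rere$, $-rere$, $-erere$, and $erere$, which again cancels to $0$. Reducedness yields $re = ere$. Combining the two equalities gives $er = ere = re$, which holds for every $r \in R$, so $e \in Z(R)$.

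There is no real obstacle here; the only thing to be careful about is the bookkeeping in the expansion, making sure each simplification $ee = e$ is applied correctly so that the four summands actually cancel. The argument is standard and uses nothing from the paper beyond the definitions of ``reduced'' and ``idempotent.''
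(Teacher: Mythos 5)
Your proof is correct and follows exactly the same route as the paper: both exhibit $er-ere$ and $re-ere$ (the paper writes $ur-uru$ and $ru-uru$) as square-zero elements and invoke reducedness. The expansions you describe check out, so there is nothing to add.
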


\begin{proof}
Let $u \in R$ be an idempotent. Take any $r\in R$.
Notice that $(ur-uru)^2=0$ and $(ru-uru)^2=0$. Using that $R$ is reduced, we conclude that $ur-uru=0$ and $ru-uru=0$. Hence, $ur=ru$. This shows that $u\in Z(R)$.
\end{proof}

By combining Lemma~\ref{lem:ReducedCentral}
and Theorem~\ref{thm:central} we get the following result.

\begin{corollary}\label{cor:altproofReducedIndec}
Let $G$ be a torsion-free group
and let $R$ be a unital $G$-graded ring.
Furthermore, suppose that the $G$-grading on $R$ is
non-degenerate and that $R_e$ is a domain.
If $R$ is reduced, then every idempotent in $R$ is trivial.
\end{corollary}

Notice that the above corollary allows us to establish the implication (ii)$\Rightarrow$(iv) in Theorem~\ref{thm:hierarchy} without relying on the primeness argument from Section~\ref{Sec:Primeness}.

\section{Gradings by quotient groups}\label{Sec:Quotient}

In this section we will show that
Problem~\ref{prob:KapSTR} can be approached by considering 
gradings by quotient groups (see Proposition~\ref{prop:quotientGN}).
For $G$-crossed products we obtain a more explicit connection (see Proposition~\ref{prop:GcrossedHomUnits}) and as an application we generalize a result of
A. A. Bovdi for a special class of solvable groups (see Theorem~\ref{thm:SpecialSolvable}).

\begin{remark}
Let $G$ be a group and let $R$ be a $G$-graded ring.
If $N$ is a normal subgroup of $G$, then $R$ may
be viewed as a $G/N$-graded ring.
Indeed, by writing
\begin{displaymath}
	R = \oplus_{g\in G} R_g = \oplus_{C \in G/N} ( \oplus_{h \in C} R_h  )
\end{displaymath}
it is easy to see that this yields a $G/N$-grading.
\end{remark}

The proof of the following lemma is straightforward
and is therefore left to the reader.

\begin{lemma}\label{lem:quotientNonDeg}
Let $G$ be a group and let $R$ be a $G$-graded ring whose $G$-grading is non-degenerate.
If $N$ is a normal subgroup of $G$, then the canonical $G/N$-grading on $R$ is non-degenerate.
\end{lemma}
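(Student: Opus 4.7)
The plan is to verify the definition of non-degeneracy for the $G/N$-gradation directly, reducing to the non-degeneracy of the underlying $G$-gradation. The key technical observation is that when we multiply an element of a coset-component $R_C$ by a $G$-homogeneous element, the resulting summands automatically land in distinct $G$-degrees, which allows us to detect non-vanishing via a single non-zero summand.

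First, I would fix a coset $C \in G/N$ and a non-zero $r \in R_C = \bigoplus_{h\in C} R_h$. Decomposing $r = \sum_{h\in C} r_h$ with $r_h \in R_h$, I would choose some $h_0 \in C$ such that $r_{h_0} \neq 0$. By non-degeneracy of the $G$-gradation, there exists $s \in R_{h_0^{-1}}$ with $r_{h_0} s \neq 0$. Since $h_0 \in C$ we have $h_0^{-1} \in C^{-1}$, so $s \in R_{h_0^{-1}} \subseteq R_{C^{-1}}$.

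Next, I would compute $rs = \sum_{h\in C} r_h s$ where each summand satisfies $r_h s \in R_{h h_0^{-1}}$. For distinct $h \in C$, the elements $h h_0^{-1} \in N$ are pairwise distinct, so the sum is the $G$-homogeneous decomposition of $rs$. Since its degree-$e$ component $r_{h_0} s$ is non-zero, we conclude $rs \neq 0$, and therefore $r R_{C^{-1}} \neq \{0\}$. A symmetric argument, using the left-handed half of the non-degeneracy of the $G$-gradation to produce $t \in R_{h_0^{-1}} \subseteq R_{C^{-1}}$ with $t r_{h_0} \neq 0$, gives $R_{C^{-1}} r \neq \{0\}$.

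There is no real obstacle here; the only point requiring care is to keep the same $h_0$ throughout each side of the argument so that the non-zero $G$-component of the product sits in a unique $G$-degree (namely $e$) and hence cannot be cancelled by the other summands.
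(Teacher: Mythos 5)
Your proof is correct: reducing to a single non-zero $G$-homogeneous component $r_{h_0}$, using non-degeneracy of the $G$-gradation to find $s\in R_{h_0^{-1}}\subseteq R_{C^{-1}}$, and observing that the summands $r_h s$ lie in pairwise distinct $G$-degrees (so the non-zero degree-$e$ component cannot be cancelled) is exactly the straightforward verification the paper has in mind when it leaves this proof to the reader. The symmetric left-handed argument is likewise fine.
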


The following result generalizes \cite[Lem.~13.1.9(i)]{Pas1977Book}
and \cite[Cor.~3.6]{OW2022}.

\begin{proposition}\label{prop:quotientGN}
Let $G$ be a group and let $R$ be a unital $G$-graded ring
whose $G$-grading is non-degenerate.
If $N$ is a normal subgroup of $G$
and $R_e$ is a domain, then the following assertions hold:
\begin{enumerate}[{\rm (i)}]
	\item
	If $R_N := \oplus_{n\in N} R_n$ is a domain and $G/N$ is a unique product group, then $R$ is a domain.
	
	\item
	Suppose that $N$ is torsion-free, that $G/N$ is a unique product group,
	and that every unit in $R$ which is contained in $R_{gN}$, for some $g\in G$,
	must be homogeneous w.r.t. the $G$-grading.
	Then every unit in $R$ is homogeneous w.r.t. the $G$-grading.
\end{enumerate}
\end{proposition}

\begin{proof}
(i)
We will view $R = \oplus_{g\in G} R_g = \oplus_{C\in G/N} \left( \oplus_{h \in C} R_h \right)$ as a $G/N$-graded ring.
By Lemma~\ref{lem:quotientNonDeg} the $G/N$-grading is non-degenerate,
and by assumption $R_N$ is a domain. The desired conclusion now follows immediately from
Theorem~\ref{thm:UPGmain}.

(ii)
We begin by noticing that, by assumption, every unit in $R_N$ must be homogeneous.
Thus, by Theorem~\ref{thm:hierarchy}, $R_N$ is a domain.
Take $x,y\in R$ which satisfy $xy=1_R$.
Let $\Supp(x)$ and $\Supp(y)$ denote the support of $x$ respectively $y$,
w.r.t. the $G$-grading.
Let $\phi : G \to G/N$ denote the quotient homomorphism.
Define $a$ and $b$ to be the cardinalities of $\phi(\Supp(x))$
and $\phi(\Supp(y))$, respectively.
If $a+b>2$, then by the unique product property of $G/N$ (and Lemma~\ref{lemma:Strojnowski})
we will reach a contradiction in the same way as in the proof of Theorem~\ref{thm:UPGmain}, by instead considering the $G/N$-grading.
Thus, $a=b=1$.
This means that there is some $g \in G$ such that
$x \in R_{gN}$ and
$y \in R_{g^{-1}N}$.
Now, by assumption, both $x$ and $y$ must be homogeneous w.r.t. the $G$-grading.
\end{proof}

\begin{remark}
By taking $N=\{e\}$, notice that from Proposition~\ref{prop:quotientGN}(i)
we recover Theorem~\ref{thm:UPGmain}(ii),
and from Proposition~\ref{prop:quotientGN}(ii) we recover Theorem~\ref{thm:UPGmain}(i).
\end{remark}

Recall that a unital $G$-graded ring $R$ is said to be a \emph{$G$-crossed product}
if, for each $g\in G$, the homogeneous component $R_g$ contains an element
which is invertible in $R$ (see \cite[Chap.~1]{NVO2004}). Every $G$-crossed product is necessarily strongly $G$-graded (see e.g. \cite[Rem.~1.1.2]{NVO2004}), and in particular its $G$-grading is non-degenerate (see Remark~\ref{rem:StrGradComReg}(a)).

The following result generalizes \cite[Lem.~13.1.9(ii)]{Pas1977Book}.

\begin{proposition}\label{prop:GcrossedHomUnits}
Let $G$ be a group and let $R$ be a $G$-crossed product.
Suppose that $N$ is a torsion-free normal subgroup of $G$, that $G/N$ is a unique product group,
and that $R_e$ is a domain.
The following two assertions are equivalent:
\begin{enumerate}[{\rm (i)}]
	\item
	Every unit in $R=\oplus_{g\in G} R_g$ is homogeneous w.r.t. the $G$-grading;
	\item
	Every unit in $R_N := \oplus_{n \in N} R_n$ is homogeneous w.r.t. the $N$-grading.
\end{enumerate}
\end{proposition}

\begin{proof}
(i)$\Rightarrow$(ii)
This is trivial.

(ii)$\Rightarrow$(i)
The $G$-grading on $R$ is non-degenerate.
Thus, the first part of the proof may be carried out in the same way as the proof of Proposition~\ref{prop:quotientGN}(ii).
Indeed, for elements $x,y\in R$ which satisfy $xy=yx=1_R$ we get that $x \in R_{gN}$ and $y\in R_{g^{-1}N}$, for some $g\in G$.
Using that $R$ is a $G$-crossed product, we may choose homogeneous units $x'$ and $y'$ of degree $g^{-1}$ and $g$, respectively, such that $x'y'=y'x'=1_R$.
Notice that
\begin{displaymath}
	1_R = x' y' = x' (xy) y' = (x'x)(yy') = (yy')(x'x)
\end{displaymath}
where $x'x \in R_N$ and $yy' \in R_N$.
By assumption $x'x$ and $yy'$ are homogeneous w.r.t. the $N$-grading on $R_N$.
Using that $x'$ and $y'$ are homogeneous, we conclude that $x$ and $y$ must be homogeneous w.r.t. the $G$-grading.
\end{proof}

By invoking Theorem~\ref{thm:UPGmain}, Example~\ref{ex:up-groups} and Theorem~\ref{thm:hierarchy} we obtain the following result.

\begin{corollary}\label{cor:Nabeliannormal}
Let $G$ be a torsion-free group and let $R$ be a
$G$-crossed product for which $R_e$ is a domain.
If $N$ is an abelian normal subgroup of $G$
such that $G/N$ is a unique product group,
then every unit in $R$ is homogeneous w.r.t. the $G$-grading.
Moreover, $R$ is a domain and every idempotent in $R$ is trivial.
\end{corollary}

As an application of the above results
we will solve Problem~\ref{prob:KapSTR}
for $G$-crossed products by a special class of solvable groups
and thereby generalize \cite[Thm.~1]{Bov1960}.

\begin{theorem}\label{thm:SpecialSolvable}
Let $G$ be a group and suppose that $G$ has a finite subnormal series
\begin{displaymath}
	\langle e \rangle = G_0 \vartriangleleft G_1 \vartriangleleft \ldots \vartriangleleft G_k = G
\end{displaymath}
with quotients $G_{i+1}/G_i$ all of which are torsion-free abelian.
If $R$ is a $G$-crossed product with $R_e$ a domain,
then every unit in $R$ is homogeneous w.r.t. the $G$-grading, and $R$ is a domain.
In particular, every idempotent in $R$ is trivial.
\end{theorem}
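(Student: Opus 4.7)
The plan is to proceed by induction on the length $k$ of the subnormal series. The base case $k=0$ is immediate, since then $G=\langle e\rangle$ and $R=R_e$ is a domain, with all units homogeneous (of degree $e$) and the only idempotents being $0$ and $1_R$. For the inductive step I will take $N=G_{k-1}$ and feed the induction hypothesis into the two quotient-group results of this section, namely Proposition~\ref{prop:GcrossedHomUnits} and Proposition~\ref{prop:quotientGN}(i).

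Before running the induction, I would establish as a preliminary remark that every $G_i$ in the series is torsion-free. This follows by an inner induction along the series from the standard extension fact: if $M\vartriangleleft H$ with both $M$ and $H/M$ torsion-free, then $H$ is torsion-free, since any element of prime order in $H$ would either lie in $M$ (impossible) or project to an element of prime order in $H/M$ (also impossible). In particular $G$ and $N=G_{k-1}$ are torsion-free.

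For the inductive step, observe first that $R_N=\oplus_{n\in N}R_n$ is itself an $N$-crossed product: for each $n\in N\subseteq G$ the component $R_n$ contains a unit of $R$, and by homogeneity of the inverse (in the $G$-crossed product $R$) that inverse lies in $R_{n^{-1}}\subseteq R_N$; hence the same element witnesses the $N$-crossed-product condition. The truncated series $\langle e\rangle\vartriangleleft G_1\vartriangleleft\cdots\vartriangleleft G_{k-1}=N$ has length $k-1$ with torsion-free abelian quotients, so the induction hypothesis applied to the $N$-crossed product $R_N$ yields that every unit of $R_N$ is homogeneous with respect to the $N$-gradation and that $R_N$ is a domain. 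Since $G/N=G_k/G_{k-1}$ is torsion-free abelian, it is a unique product group by Example~\ref{ex:up-groups}. Proposition~\ref{prop:GcrossedHomUnits}, with hypotheses $N$ torsion-free normal, $G/N$ unique product, $R_e$ a domain and condition (ii) satisfied by the induction hypothesis, then delivers (i): every unit in $R$ is homogeneous with respect to the $G$-gradation. Viewing $R$ as $G/N$-graded (which by Lemma~\ref{lem:quotientNonDeg} is non-degenerate) and invoking Proposition~\ref{prop:quotientGN}(i), we conclude that $R$ is a domain, and the idempotent statement follows at once.

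The main obstacle is not a delicate estimate but a bookkeeping one: the induction must carry \emph{both} ``every unit in $R_N$ is homogeneous'' and ``$R_N$ is a domain'' simultaneously, since Proposition~\ref{prop:GcrossedHomUnits} requires the first and Proposition~\ref{prop:quotientGN}(i) requires the second. One must also check, at each stage, the slightly pedestrian point that the restriction of a $G$-crossed product to a subgroup is still a crossed product, and that $N$ itself remains torsion-free. Once those are in hand, the argument is a clean synthesis of the machinery already developed in this section.
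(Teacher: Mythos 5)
Your proof is correct and follows essentially the same route as the paper: an induction along the subnormal series, feeding the homogeneity of units in $R_{G_{i}}$ into Proposition~\ref{prop:GcrossedHomUnits} at each step, after noting that each $G_i$ is torsion-free and that each quotient is a unique product group. The only cosmetic difference is that the paper obtains the final domain and idempotent conclusions from Theorem~\ref{thm:hierarchy} applied to the torsion-free group $G$, whereas you invoke Proposition~\ref{prop:quotientGN}(i) and therefore carry the statement ``$R_N$ is a domain'' through the induction; both are valid and interchangeable here.
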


\begin{proof}
Using that $G_0$ is a torsion-free normal subgroup of $G_1$,
that $R_{G_0} = R_e$ is a domain,
and that $G_1/G_0$ is a unique product group,
we get by Proposition~\ref{prop:GcrossedHomUnits}
that every unit in $R_{G_1}$ is homogeneous.
More generally, if $G_i$ is torsion-free and every unit in $R_{G_i}$ is homogeneous,
then, by Theorem~\ref{thm:hierarchy}, $R_{G_i}$ is a domain.
Using that $G_i$ is normal in $G_{i+1}$
and that $G_{i+1}/G_i$ is a unique product group,
Proposition~\ref{prop:GcrossedHomUnits} yields that
every unit in $R_{G_{i+1}}$ is homogeneous.
Furthermore, since both $G_{i+1}/G_i$ and $G_i$ are torsion-free, we notice that $G_{i+1}$ is also torsion-free.
By induction over $i$ we conclude that every unit in $R$ is homogeneous w.r.t. the $G$-grading.
Clearly, $G$ is torsion-free.
Thus, Theorem~\ref{thm:hierarchy}
yields
that $R$ is a domain and that every idempotent in $R$ is trivial.
\end{proof}

\begin{remark}
It is also possible to obtain
Theorem~\ref{thm:SpecialSolvable}
directly from Theorem~\ref{thm:UPGmain}.
Indeed, one can show that
the group $G$ in Theorem~\ref{thm:SpecialSolvable}
is right-ordered (see \cite[Lem.~13.1.6]{Pas1977Book}), and hence a unique product group.
\end{remark}

\section{A conjecture}\label{Sec:Conjecture}

Recall that, up until now, we have been able to solve Problem~\ref{prob:KapSTR}
in the affirmative in the following important cases:
\begin{itemize}
	\item When $G$ is a unique product group, including e.g. all torsion-free abelian groups (see Theorem~\ref{thm:UPGmain} and Example~\ref{ex:up-groups}).
	\item When $R$ is commutative (see Corollary~\ref{Cor:Rcomm}).
	\item For central elements (see Theorem~\ref{thm:central}).
\end{itemize}

Despite the fact that the list of torsion-free non-unique product groups is growing (see Remark~\ref{rem:nonUP}), we dare, in view of our findings,
present the following generalizations of 
Kaplansky's conjectures for group rings.

\begin{conjecture}\label{conj:GradedRingsConjecture}
Let $G$ be a torsion-free group and let $R$ be a unital $G$-graded ring whose $G$-grading is non-degenerate. If $R_e$ is a domain with $\Char(R_e)=0$, then the following assertions hold:
\begin{enumerate}[{\rm (a)}]
	\item
	Every unit in $R$ is homogeneous;
	\item
	$R$ is a domain;
	\item
	Every idempotent in $R$ is trivial.
\end{enumerate}
\end{conjecture}

\begin{remark}\label{rem:FinGen}
Using Lemma~\ref{lem:zdunitsRHR} it is not difficult to see, that in order to resolve Conjecture~\ref{conj:GradedRingsConjecture}
it is enough to consider the case where $G$ is finitely generated.
\end{remark}

\begin{remark}
In their work,
K. Dykema, T. Heister and K. Juschenko \cite{DHJ2015}
and independently
P. Schweitzer \cite{Sch2013},
identified certain classes of finitely presented torsion-free groups.
Amongst other results, they showed that in order to prove
the zero-divisor conjecture for group rings over the field of two elements,
it is sufficient to prove the conjecture for groups coming from the previously mentioned  classes of finitely presented groups.
Remark~\ref{rem:FinGen} is in line with their observations.
\end{remark}

As mentioned in Section~\ref{Sec:Intro}, the idempotent conjecture for group rings
is related to the Kadison-Kaplansky conjecture for group C*-algebras,
to the Baum-Connes conjecture and to the Farrell-Jones conjecture.
Moreover, the zero-divisor conjecture for group rings
is related to the Atiyah conjecture.
Further investigations are required to determine
which relationship (if any)
Conjecture~\ref{conj:GradedRingsConjecture}
may have to other well-known conjectures.


\begin{thebibliography}{00}

\bibitem{AS2023}
G. Arzhantseva and M. Steenbock.
Rips construction without unique product.
{\it Pacific J. Math.} {\bf 322} (2023), no. 1, 1--9.

\bibitem{BLR2008}
A. Bartels, W. L\"{u}ck and H. Reich.
On the Farrell-Jones conjecture and its applications.
{\it J. Topol.} {\bf 1} (2008), no. 1, 57--86.

\bibitem{Bav1993}
V. V. Bavula.
Generalized Weyl algebras and their representations.
{\it St. Petersburg Math. J.} {\bf 4} (1993), no. 1, 71--92.

\bibitem{Bell1987}
A. D. Bell.
Localization and ideal theory in Noetherian strongly group-graded rings.
{\it J. Algebra} {\bf 105} (1987), no. 1, 76--115.

\bibitem{Bov1960}
A. A. Bovdi.
Group rings of torsion-free groups.
{\it Sibirsk. Mat. \v{Z}.} {\bf 1} (1960), 555--558.

\bibitem{Bow2000}
B. H. Bowditch.
A variation on the unique product property.
{\it J. London Math. Soc. (2)} {\bf 62} (2000), no. 3, 813--826.

\bibitem{Bro1976}
K. A. Brown.
On zero divisors in group rings.
{\it Bull. London Math. Soc.} {\bf 8} (1976), no. 3, 251--256.

\bibitem{Bur1970}
R. G. Burns.
Central idempotents in group rings.
{\it Canad. Math. Bull.} {\bf 13} (1970), 527--528.

\bibitem{Car2014}
W. Carter.
New examples of torsion-free non-unique product groups.
{\it J. Group Theory} {\bf 17} (2014), no. 3, 445--464.

\bibitem{Cli1980}
G. H. Cliff.
Zero divisors and idempotents in group rings.
{\it Canadian. J. Math.} {\bf 32} (1980), no. 3, 596--602.

\bibitem{CR1983}
M. Cohen and L. H. Rowen.
Group graded rings.
{\it Comm. Algebra} {\bf 11} (1983), no. 11, 1253--1270.

\bibitem{DHJ2015}
K. Dykema, T. Heister and K. Juschenko.
Finitely presented groups related to Kaplansky's direct finiteness conjecture.
{\it Exp. Math.} {\bf 24} (2015), no. 3, 326--338.

\bibitem{Far1976}
D. R. Farkas and R. L. Snider.
$K_0$ and Noetherian group rings.
{\it J. Algebra} {\bf 42} (1976), no. 1, 192--198.

\bibitem{For1973}
E. Formanek.
The zero divisor question for supersolvable groups.
{\it Bull. Austral. Math. Soc.} {\bf 9} (1973), 69--71.

\bibitem{For1973b}
E. Formanek.
Idempotents in Noetherian group rings.
{\it Canadian J. Math.} {\bf 25} (1973), 366--369.

\bibitem{Gardam2021}
G. Gardam.
A counterexample to the unit conjecture for group rings.
{\it Ann. of Math. (2)} {\bf 194} (2021), no. 3, 967--979.

\bibitem{GMS2015}
D. Gruber, A. Martin and M. Steenbock.
Finite index subgroups without unique product in graphical small cancellation groups.
{\it Bull. Lond. Math. Soc.} {\bf 47} (2015), no. 4, 631--638.


\bibitem{HigmanThesis1940}
G. Higman.
{\it Units in group rings}.
D.Phil. thesis.
University of Oxford (1940).

\bibitem{H1940}
G. Higman.
The units of group-rings.
{\it Proc. London Math. Soc. (2)} {\bf 46} (1940), 231--248.

\bibitem{HK2001}
N. Higson and G. Kasparov.
$E$-theory and $KK$-theory for groups which act properly and isometrically on Hilbert space.
{\it Invent. Math.} {\bf 144} (2001), no. 1, 23--74.

\bibitem{Kap56}
I. Kaplansky.
``Problems in the theory of rings.''
Report of a conference on linear algebras, June, 1956, National Academy of Sciences-National Research Council, Washington, Publ. \underline{502}, (1957), pp. 1--3.

\bibitem{Kap70}
I. Kaplansky.
``Problems in the theory of rings'' revisited.
{\it Amer. Math. Monthly} {\bf 77} (1970), 445--454.

\bibitem{KRD2016}
S. Kionke and J. Raimbault.
On geometric aspects of diffuse groups,
with an appendix by Nathan Dunfield.
{\it Doc. Math.} {\bf 21} (2016), 873--915.

\bibitem{KLM1988}
P. H. Kropholler, P. A. Linnell and J. A. Moody.
Applications of a new $K$-theoretic theorem to soluble group rings.
{\it Proc. Amer. Math. Soc.} {\bf 104} (1988), no. 3, 675--684.

\bibitem{Li77}
P. A. Linnell.
Zero divisors and idempotents in group rings.
{\it Math. Proc. Cambridge Philos. Soc.} {\bf 81} (1977), no. 3, 365--368.

\bibitem{L2002}
W. L\"{u}ck.
{\it $L^2$-invariants: theory and applications to geometry and $K$-theory.}
Ergebnisse der Mathematik und ihrer Grenzgebiete. 3. Folge.
A Series of Modern Surveys in Mathematics, 44 (Springer-Verlag, 2002).

\bibitem{Mal2014}
B. Malman.
{\it Zero-divisors and idempotents in group rings}.
Master's Theses in Mathematical Sciences,
Lund University, Faculty of Engineering, 2014:E45,
LUTFMA-3265-2014, (2014).

\bibitem{Mar1986}
Z. Marciniak.
Cyclic homology and idempotents in group rings.
In {\it Transformation groups, Pozna\'{n} 1985}, 
Lecture Notes in Math. 1217 (Springer-Verlag, 1986), pp. 253--257.

\bibitem{MY2002}
I. Mineyev and G. Yu.
The Baum-Connes conjecture for hyperbolic groups.
{\it Invent. Math.} {\bf 149} (2002), no. 1, 97--122.

\bibitem{Murray2021}
A. G. Murray.
More Counterexamples to the Unit Conjecture for Group Rings.
Preprint (2021). 
https://arxiv.org/abs/2102.11818

\bibitem{NVO2004}
C. N\v{a}st\v{a}sescu and F. Van Oystaeyen.
{\it Methods of graded rings.}
Lecture Notes in Mathematics, 1836 (Springer-Verlag, 2004).

\bibitem{NVO2008}
E. Nauwelaerts and F. Van Oystaeyen.
Introducing crystalline graded algebras.
{\it Algebr. Represent. Theory} {\bf 11} (2008), no. 2, 133--148.

\bibitem{Neu1951}
B. H. Neumann.
Groups with finite classes of conjugate elements.
{\it Proc. London Math. Soc. (3)} {\bf 1} (1951), 178--187.

\bibitem{NO2020}
P. Nystedt and J. \"{O}inert.
Group gradations on Leavitt path algebras.
{\it J. Algebra Appl.} {\bf 19} (2020), no. 9, 2050165, 16 pp.

\bibitem{NOP2018}
P. Nystedt, J. \"{O}inert and H. Pinedo.
Epsilon-strongly graded rings, separability and semisimplicity.
{\it J. Algebra} {\bf 514} (2018), 1--24.

\bibitem{OL2012}
J. \"{O}inert and P. Lundstr\"{o}m.
The ideal intersection property for groupoid graded rings.
{\it Comm. Algebra} {\bf 40} (2012), no. 5, 1860--1871.

\bibitem{OW2022}
J. \"{O}inert and S. Wagner.
Complex group rings and group C*-algebras of group extensions.
{\it Journal of Algebraic Combinatorics} (2022)

\bibitem{Pas1971Book}
D. S. Passman.
{\it Infinite group rings.}
Pure and Applied Mathematics, 6
(Marcel Dekker, 1971).

\bibitem{Pas1977Book}
D. S. Passman.
{\it The algebraic structure of group rings.}
Pure and Applied Mathematics
(Wiley-Interscience, 1977).

\bibitem{Pas1989Book}
D. S. Passman.
{\it Infinite crossed products.}
Pure and Applied Mathematics, 135
(Academic Press, 1989).


\bibitem{Pro1988}
S. D. Promislow.
A simple example of a torsion-free, nonunique product group.
{\it Bull. London Math. Soc.} {\bf 20} (1988), no. 4, 302--304.

\bibitem{Pus2002}
M. Puschnigg.
The Kadison-Kaplansky conjecture for word-hyperbolic groups.
{\it Invent. Math.} {\bf 149} (2002), no. 1, 153--194.

\bibitem{RS1987}
E. Rips and Y. Segev.
Torsion-free group without unique product property.
{\it J. Algebra} {\bf 108} (1987), no. 1, 116--126.

\bibitem{RS1964}
W. Rudin and H. Schneider.
Idempotents in group rings.
{\it Duke Math. J.} {\bf 31} (1964), 585--602.

\bibitem{Sandling1981}
R. Sandling. Graham Higman’s thesis “Units in group rings”, 
In {\it Integral representations and applications, Oberwolfach, 1980}, Lecture Notes in Math. 882 (Springer-Verlag, 1981),
pp. 93--116.

\bibitem{Sch2013}
P. Schweitzer.
On zero divisors with small support in group rings of torsion-free groups.
{\it J. Group Theory} {\bf 16} (2013), no. 5, 667--693.

\bibitem{Ste2015}
M. Steenbock.
Rips-Segev torsion-free groups without the unique product property.
{\it J. Algebra} {\bf 438} (2015), 337--378.

\bibitem{Steinberg2019}
B. Steinberg.
Diagonal-preserving isomorphisms of \'{e}tale groupoid algebras.
{\it J. Algebra} {\bf 518} (2019), 412--439.

\bibitem{Str1980}
A. Strojnowski.
A note on u.p. groups.
{\it Comm. Algebra} {\bf 8} (1980), no. 3, 231--234.

\bibitem{Val2002}
A. Valette.
{\it Introduction to the Baum-Connes conjecture.}
From notes taken by Indira Chatterji. With an appendix by Guido Mislin. Lectures in Mathematics ETH Z\"{u}rich (Birkh\"{a}user Verlag, 2002).

\end{thebibliography}
\end{document}